\documentclass[review]{elsarticle}

\usepackage[portrait, margin=1in]{geometry}
\usepackage{xcolor}
\usepackage{graphicx}
\usepackage{graphicx,epsfig}
\graphicspath{ {./images/} }
\usepackage{float}
\usepackage{enumitem}
\usepackage{amsmath}
\usepackage{amsthm}
\usepackage{multicol}
\usepackage{amssymb}
\usepackage{hyperref}
\usepackage{mathrsfs}
\usepackage{pstricks}
\usepackage{tikz}
\usepackage{tikz-cd}
\usepackage{amsfonts}
\usepackage{amssymb}
\usepackage{float}
\usepackage{url}
\usepackage{amsmath}
\usepackage{graphicx}
\usepackage{pstricks}
\usepackage{amsxtra}

\newcommand{\C}{\mathbb{C}}

\renewcommand{\Re}[1]{\operatorname{Re}\,{#1}}
\renewcommand{\Im}[1]{\operatorname{Im}\,{#1}}

\newcommand{\rk}[1]{\operatorname{rank}\,{#1}}
\newcommand{\card}[1]{\operatorname{card}\,{#1}}
\newcommand{\Ran}[1]{\text{Ran}\,{#1}}

\newtheorem{thm}{Theorem}[section]

\newtheorem{prop}[thm]{Proposition}
\newtheorem{defn}[thm]{Definition}

\theoremstyle{plain}

\theoremstyle{remark}

\theoremstyle{remark}

\theoremstyle{remark}

\definecolor{gold}{rgb}{0.85,.66,0}
\definecolor{cherry}{rgb}{0.9,.1,.2}
\definecolor{burgundy}{rgb}{0.8,.2,.2}
\definecolor{orangered}{rgb}{0.85,.3,0}
\definecolor{orange}{rgb}{0.85,.4,0}
\definecolor{olive}{rgb}{.45,.4,0}
\definecolor{lime}{rgb}{.6,.9,0}
\definecolor{green}{rgb}{.2,.7,0}
\definecolor{darkgreen}{rgb}{.1,.5,0}
\definecolor{grey}{rgb}{.4,.4,.2}
\definecolor{brown}{rgb}{.4,.2,.1}
\definecolor{blue}{rgb}{0,.0, .81}
\definecolor{bluepurple}{rgb}{.3, .0, .7}

\usepackage{natbib}
\begin{document}

\begin{frontmatter}
\title{{\bf The Role of Gr\"obner Bases in the Study \\ of Extremal Truncated Moment Problems}}

\author[iowa]{Ra\'ul E. Curto\corref{cor1}}
\ead{raul-curto@uiowa.edu}
\cortext[cor1]{Corresponding author}


\author[iowa]{Marc R. Moore}
\ead{marc-moore@uiowa.edu}

\address[iowa]{Department of Mathematics, The University of Iowa, Iowa City, Iowa, U.S.A.}

\begin{keyword} truncated complex moment problem; column relations; algebraic variety; Gr\"obner basis; harmonic polynomial.
 
\MSC[2020] {Primary 44A60; Secondary 47A57, 46G10.}
\end{keyword}

\begin{abstract}
In a 2014 paper, R.E. Curto and S. Yoo proved that a moment matrix $M(3)$ with specific harmonic polynomials as column relations admits a representing measure if and only if a condition at the level of moments holds. \ In this paper, we generalize the 2014 result to arbitrary moment matrices $M(k)$ ($k \in \mathbb{Z}_{+}$), with column relations given by general harmonic polynomials. \ We accomplish this by proving that the Gr\"obner basis for the ideal generated by a finite variety associated with the moment matrix provides all the necessary column relations for the matrix as well as a suitable condition on the moments, which is equivalent to the existence of a representing measure. 
\end{abstract}

\end{frontmatter}

\section{Introduction}

Using, as a building block, a 2014 result by R.E. Curto and S. Yoo \cite{CY14}, we consider a general class of truncated moment problems with column relations associated with a large class of polynomials in $\mathbb{C}[z,\bar{z}]$. \ In the process, we extend, in substantial and significant ways, the existing theory for sextic moment problems with singular moment matrices $M(3)$ and admitting harmonic polynomials as column relations. \ The new results yield necessary and sufficient conditions for the existence of finitely atomic representing measures for moment matrices $M(k)$ ($k \in \mathbb{Z}_{+}$), in terms of computable conditions at the level of the given moments. \ The class of polynomials that qualify for our approach is much larger than the cubic polynomials considered in \cite{CY14}. \ We accomplish this by observing that the Gr\"obner basis for the ideal generated by a finite variety associated with the moment matrix provides all the necessary column relations for the matrix as well as a suitable condition on the moments, which is equivalent to the existence of a representing measure (necessarily finitely atomic). \ Our results aim to continue to develop the natural interplay between operator theory, positive polynomials, and real algebraic geometry (see, e.g., \cite{Scheiderer24}, \cite{Schmudgen17}, \cite{Schweighofer05} and \cite{YooZalar24}) by emphasizing the role of Gr\"obner bases in determining the necessary and sufficient conditions for the existence of representing measures.

\section{Notation and Basic Terminology}
 
\subsection{Basic Theory of Truncated Moment Problems}

Let $\gamma \equiv \gamma^{(2k)} = \{ \gamma_{00}, \gamma_{01}, \gamma_{10}, \ldots, \gamma_{0,2k}, \gamma_{1,2k-1}, \ldots, \gamma_{2k-1,1}, \gamma_{2k} \}$ be a sequence of complex numbers with $\gamma_{00} > 0$ and $\gamma_{ji} = \overline{\gamma_{ij}}$ (for all $i,j \ge 0$, with $i+j \le 2k$. \ The {\it truncated complex moment problem} (TCMP) asks for necessary and sufficient conditions for the existence of a positive Borel measure $\mu$ supported in $\mathbb{C}$ such that $\gamma_{ij} = \int \overline{z}^i z^j \, d\mu$, $0 \leq i + j \leq 2k$. When such a measure exists, $\gamma$ is called a {\it truncated moment sequence} and $\mu$ is called a {\it representing measure} for $\gamma$. 

\par

The Riesz-Haviland Theorem \cite{Riesz23} tells us that a truncated sequence of complex numbers is a moment sequence if and only if the {\it Riesz functional} $\Lambda$ is nonnegative on nonnegative polynomials; that is, the functional $\Lambda: \mathbb{C}[z,\overline{z}] \to \mathbb{R}$, given by $\sum_{i,j} \alpha_{ij} z^i \overline{z}^j \mapsto \sum_{i,j} \alpha_{ij} \gamma_{ij}$, has the property that $\Lambda(p) \geq 0$ if and only if $p$ is a nonnegative polynomial. To conform with the prevailing usage in the literature, we will ordinarily refer to nonnegativity as positivity, and rephrase the above condition on the Riesz functional as requiring that $\Lambda$ be positive on positive polynomials. 

One way to detect the positivity of the Riesz functional is using the {\it moment matrix}, $M(k) \equiv M(k)(\gamma)$, defined as follows. First, let  
\begin{equation*}
                M[i,j] := 
                \begin{pmatrix}
                        \gamma_{i,j} & \gamma_{i+1,j-1} & \cdots & \gamma_{i+j,0} \\
                        \gamma_{i-1,j+1} & \gamma_{i,j} & \cdots & \gamma_{i+j - 1,1} \\
                        \vdots & \vdots & \ddots & \vdots \\
                        \gamma_{0,j+i} & \gamma_{1,j+i-1} & \cdots & \gamma_{j,i}
                \end{pmatrix} \\
\end{equation*}
Then put
\begin{equation*}
                M(k) :=
                \begin{pmatrix}
                        M[0,0] & M[0,1] & \cdots & M[0,k] \\
                        M[1,0] & M[1,1] & \cdots & M[1,k] \\
                        \vdots & \vdots & \ddots & \vdots \\
                        M[k,0] & M[k,1] & \cdots & M[k,k]
                \end{pmatrix}.
\end{equation*}
The moment matrix is positive semidefinite if and only if the Riesz functional is positive on sums of squares. 
That is to say, positivity of the moment matrix is a necessary condition for the existence of a representing measure. \ (For more on the basic theory of truncated moment problems, the reader is referred to \cite{CF91}, \cite{CF96},\cite{CF98}, \cite{CFM08} and \cite{Schmudgen17}.) 

\par

We connect the space of polynomials with the moment matrix using the so-called {\it functional calculus} for the moment matrix. \ 
Let $\mathcal{C} \equiv \mathcal{C}_{M(k)}$ be the column space of $M(k)$. \ 
We label each column in $M(k)$ as $1, Z, \overline{Z}, \overline{Z}^2, \overline{Z} Z, \overline{Z}^2, \ldots$, using the degree-lexicographic order. \ 
We then connect the columns to the space of polynomials using the map $\mathbb{C}[z,\overline{z}] \to \mathbb{C}$ given by
\begin{equation} \label{funccalc}
        \sum_{i,j} a_{ij} \overline{z}^i z^j \mapsto \sum_{i,j} a_{ij} \overline{Z}^i Z^j.
\end{equation}

\par 

It is important to note that $\overline{Z}^i Z^j$ (written in capital letters) is {\it not} a monomial in any polynomial ring, but rather a label for a vector in $\mathcal{C}$; monomials (and polynomials) will be represented using lowercase letters. 

\par
The fundamental ideas behind the existing results can be summarized as follows:

(a) if a moment problem has an associated moment matrix $M(n)(\gamma)$ that is singular, then at least one of its columns must be a linear combination of the others;

(b) each column relation can be described by a polynomial $p$;

(c) the zero set $\mathcal{Z}(p)$ of the above-mentioned polynomial must contain the support of a representing measure $\mu$, if such a measure exists;

(d) by taking the intersection of all possible zero sets (this is the so-called algebraic variety of $\gamma$, usually denoted by $V(\gamma)$), we obtain a first candidate for the support of $\mu$.

In fact, there is a canonical functional calculus that determines the {\it algebraic variety}, $\mathcal{V} \equiv \mathcal{V}(M(k))$, associated with the moment matrix: 
\begin{equation*}
        \mathcal{V} := \bigcap_{\substack{p(Z,\overline{Z}) = 0, \\ \deg{p} \leq k}} \mathcal{Z}(p),
\end{equation*}
where $\mathcal{Z}(p)$ denotes the zero set of $p(z,\overline{z})$. \ 
The moment matrix acts on the vector space of polynomials of degree $k$ by considering $p$ as a vector $\hat{p} \in \mathbb{C}^{\dim{\mathbb{C}[z,\overline{z}]_{k}}}$ whose entries are the coefficients of $p$, and multiplying by $M(k)$ on the left.

Observe then that $M(k) \cdot \hat{p} = 0$ if and only if $p(Z,\overline{Z}) \equiv 0$; that is, every $\hat{p}$ in the kernel of $M(k)$ corresponds to a column relation $p(Z,\bar{Z}) = 0$ of $M(k)$. \ 

Let $r := \text{rank}\,{M(k)}$ and $v := \text{card}\,{\mathcal{V}}$. \ Therefore, if $\gamma$ admits a representing measure, then 
\begin{equation*}
        r \le v.
\end{equation*}
When $r = v$, we will say that the moment problem is {\it extremal}. \ 

In the case when a representing measure $\mu$ exists, J. Stochel and F. Szafraniec proved in \cite{SS94} that the support of $\mu$ must be contained in $\mathcal{V}$, and therefore $r \leq \text{card}\,{\text{supp}\,{\mu} \leq v}$. \ As a result, if $p \in \mathbb{C}[z,\overline{z}]_{2k}$ and $p \vert_{\mathcal{V}} \equiv 0$, then 
\begin{equation} \label{consistency}
        \Lambda(p) = \int_{\mathbb{C}} p(z,\overline{z}) \, d\mu = 0.
\end{equation}
This is called the {\it consistency condition} for moment matrices. 

\par
We have thus discussed three necessary conditions for the existence of a representing measure for a given truncated complex sequence $\gamma$: (i) the moment matrix $M(k)$ must be positive semidefinite, (ii) the polynomials of degree at most $2k$ vanishing in $\mathcal{V}$ must satisfy \eqref{consistency}, and (iii) the rank of $M(k)$ must be bounded above by the cardinality of $\mathcal{V}$. \ It was shown by R.E. Curto, L.A. Fialkow, and M. M\"oller \cite{CFM08} that in the case of extremal moment problems, positivity and consistency are also {\it sufficient} for the existence of a representing measure.

In the sequel, we will consider in detail the extremal case. 

\subsection{Cubic column relations in the extremal TMP} \ In \cite{CY14}, the authors developed a research program to study the Sextic MP, that is, the case 
$M(3)\geq 0$, $M(2)>0$, with \textit{finite} algebraic variety. \ Since $M(3)$
is a square matrix of size $10$, the possible significant values for $\operatorname{rank} \; M(3)$ are $7$
and $8$. \ (While $\operatorname{rank} \; M(3)=9$ is theoretically possible, the associated algebraic variety would be infinite, being determined by a single column dependence relation.) \ When $r=v=7$, the approach in \cite{CY14} first focused on the case of a column relation given by a {\it harmonic} polynomial $q(z,\bar{z}):=f(z)-%
\overline{g(z)}$, where $f$ and $g$ are analytic polynomials, and $\deg \
q=3 $. \ Using degree-one transformations and symmetric properties of such
polynomials, the associated TMP can be reduced to the case $Z^{3}=itZ+u\bar{Z}$, with $%
t,u\in \mathbb{R}$. \ Wilmshurst \cite{WilmshurstThesis}, Crofoot-Sarason \cite{CrSa} and
Khavinson-{\v S}wiatek \cite{KhavSwia03} proved that for $\deg \ f=3$, we have $\operatorname{%
card}\ \mathcal{Z}(f(z)-\bar{z})\leq 7$. \ It then follows that a TMP
with this cubic column relation can have at most $7$ points in its algebraic variety. \ As proved in \cite{CY14}, the polynomial $q_{7}(z,\bar{z}):=z^{3}-itz-u\bar{z} ~(0<u<t<2u)$ has exactly $7$ zeros. \ In fact, $\mathcal{Z}(q_{7})$ consists of the origin, two points equidistant
from the origin (located on the bisector $z=i\bar{z}$), and four points on a
circle, symmetrically located with respect to the bisector (cf. Figure \ref{figurecubic}). \ There is also a cubic polynomial whose zero set is the union of the bisector and the circle, given by $q_{LC}(z,\overline{z}):=i(z-i \overline{z})(\overline{z}z-u)$. \ The fact that $\mathcal{Z}(q_7) \subseteq \mathcal{Z}(q_{LC})$ was crucial and led to the main result in \cite{CY14}.

\setlength{\unitlength}{1mm} \psset{unit=12mm} 

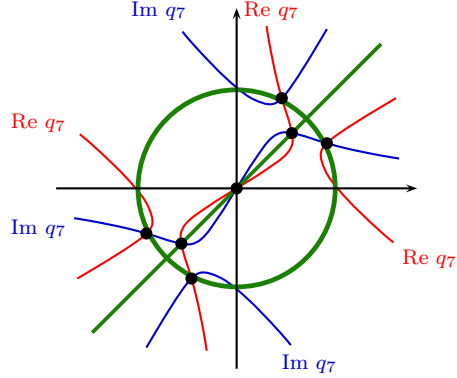
\begin{figure}[th]
\begin{center}

\begin{picture}(15,24)


\pscurve[linecolor=red](-.331,-1.8)(-.364,-1.6)(-.447,-1.2)(-.5,-1)(-.612,-.612)(-.618,-.5)(-.569,-.4)(-.315,-.2)(0,0)(.315,.2)(.569,.4)(.618,.5)(.612,.612)(.5,1)(.447,1.2)(.364,1.6)(.331,1.8)

\pscurve[linecolor=red](1.738,-.6)(1.618,-.5)(1.118,0)(.981,.2)(.935,.4)(1,.5)(1.14,.612)(1.766,1)

\pscurve[linecolor=red](-1.766,-1)(-1.14,-.612)(-1,-.5)(-.935,-.4)(-.981,-.2)(-1.118,0)(-1.618,.5)(-1.738,.6)


\pscurve[linecolor=blue](-1.8,-.331)(-1.6,-.364)(-1.2,-.447)(-1,-.5)(-.612,-.612)(-.5,-.618)(-.4,-.569)(-.2,-.315)(0,0)(.2,.315)(.4,.569)(.5,.618)(.612,.612)(1,.5)(1.2,.447)(1.6,.364)(1.8,.331)
\pscurve[linecolor=blue](-.6,1.738)(-.5,1.618)(0,1.118)(.2,.981)(.4,.935)(.5,1)(.612,1.14)(1,1.766)
\pscurve[linecolor=blue](-1,-1.766)(-.612,-1.14)(-.5,-1)(-.4,-.935)(-.2,-.981)(0,-1.118)(.5,-1.618)(.6,-1.738)


\pscircle[linecolor=darkgreen, linewidth=1.8pt](0,0){38.3pt}
\psline[linecolor=darkgreen, linewidth=1.4pt](-1.6,-1.6)(1.6,1.6)

\psset{linecolor=black}
\qdisk(1,0.5){2.3pt}
\qdisk(0.5,1){2.3pt}
\qdisk(-1,-0.5){2.3pt}
\qdisk(-0.5,-1){2.3pt}
\qdisk(0.612372,0.612372){2.3pt}
\qdisk(-0.612372,-0.612372){2.3pt}
\qdisk(0,0){2.3pt}

\put(-30,8){\color{red}{\footnotesize Re $q_7$}}
\put(-14,23){\color{blue}{\footnotesize Im $q_7$}}
\put(22,-10){\color{red}{\footnotesize Re $q_7$}}
\put(-30,-6){\color{blue}{\footnotesize Im $q_7$}}
\put(1,23){\color{red}{\footnotesize Re $q_7$}}
\put(6,-24){\color{blue}{\footnotesize Im $q_7$}}

\psline{->}(-2,0)(2,0)
\psline{->}(0,-2)(0,2)

\end{picture}

\end{center}

\vspace{75pt}

\caption{{\footnotesize The $7$-point set $\mathcal{Z}(q_{7})$. \ (The {\bf {\color{darkgreen} circle}} has radius $\sqrt{u}$.)}}
\label{figurecubic}

\end{figure}

\begin{thm}
\label{thmcubic} (\cite{CY14}) \ Let $M(3)\geq 0\,$, with $M(2)>0$ and $q_{7}(Z,\bar{Z})=0$.
\ Let $q_{LC}(z,\overline{z}):=i(z-i \overline{z})(\overline{z}z-u)$. \ 
Then there exists a representing measure for $M(3)$ if and only if

\begin{equation*}
\left\{ 
\begin{array}{ccc}
\Lambda (q_{LC}) & = & 0 \\ 
\Lambda (zq_{LC}) & = & 0. 
\end{array}%
\right.
\end{equation*}

\end{thm}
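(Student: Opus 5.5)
Our plan is to reduce the statement to the extremal criterion of Curto--Fialkow--M\"oller \cite{CFM08}. When $r=v$, positivity together with the consistency condition \eqref{consistency} suffices for a representing measure.

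\emph{Extremality.} Since $M(2)>0$, we have $\operatorname{rank} M(3)\ge \operatorname{rank} M(2)=6$. The column relation $q_7(Z,\bar Z)=0$ gives $\mathcal{V}\subseteq\mathcal{Z}(q_7)$, so $v\le 7$. By recursiveness of positive moment matrices, conjugating the relation yields $\bar Z^3=-it\bar Z+uZ$ as well. These two relations are independent, which would give rank $8$. We therefore need to determine the rank precisely, and the natural hypothesis is $r=7$. Here $r\le v\le 7$ forces $v=7$, so $\mathcal{V}=\mathcal{Z}(q_7)$, the seven explicit points of Figure \ref{figurecubic}, and the problem is extremal. (I expect the rank-$8$ scenario either to be excluded by $v\ge r$ or to be absorbed into the consistency conditions; I would check this first.)

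\emph{Necessity.} Both $q_{LC}$ and $zq_{LC}$ vanish on $\mathcal{Z}(q_7)\subseteq\mathcal{Z}(q_{LC})$, and they have degrees $3$ and $4\le 6$. If a measure exists, its support lies in $\mathcal{V}$ by Stochel--Szafraniec \cite{SS94}, so $\Lambda(q_{LC})=\Lambda(zq_{LC})=0$. This direction is immediate.

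\emph{Sufficiency.} This is the main obstacle. We must show that the two displayed equations imply $\Lambda(p)=0$ for every $p$ of degree $\le 6$ vanishing on the $7$-point set $\mathcal{V}$. We would describe the ideal $I(\mathcal{V})$ by a Gr\"obner basis containing $q_7$, $\bar q_7$, $q_{LC}$, and a few further low-degree elements (for instance $\bar z q_{LC}$ and products with conjugates). Dividing $p$ by this basis writes $p=\sum h_j g_j$ with $\deg(h_jg_j)\le 6$.

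For the terms involving $q_7$ or $\bar q_7$, we have $\Lambda(h q_7)=0$ whenever $\deg h\le 3$. Indeed, $\Lambda(\bar z^a z^b q_7)$ is an entry of $M(3)$ applied to the kernel vector $\hat q_7$, after splitting $h$ into monomials $\bar z^a z^b$ with $a,b\le 3$ and using the row/column structure.

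The remaining terms are multiples of $q_{LC}$ modulo $(q_7,\bar q_7)$. Here the symmetry $\overline{q_{LC}}=-q_{LC}\cdot(\text{unit factor})$ together with reality of $\Lambda$ on conjugate pairs reduces everything to $\Lambda(q_{LC})$, $\Lambda(zq_{LC})$ and their conjugates. To see this, note that the normal forms modulo $(q_7,\bar q_7)$ of $z^a\bar z^b q_{LC}$ collapse, via the relation $z^3=itz+u\bar z$, to combinations of $q_{LC}$, $zq_{LC}$, $\bar z q_{LC}$ and the ideal $(q_7,\bar q_7)$ of degree $\le 6$.

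Carrying out this reduction explicitly, with care that every product stays in degree $\le 6$ so that the column relations may legitimately be used, is the technical core. Once consistency is established, \cite{CFM08} gives a (7-atomic) representing measure.
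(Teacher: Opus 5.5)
Your overall route is the one the paper attributes to \cite{CY14}: use the extremal criterion of \cite{CFM08}, and obtain consistency by writing every $p\in\mathbb{C}_6[z,\bar z]$ vanishing on $\mathcal{Z}(q_7)$ as $fq_7+g\bar q_7+hq_{LC}$ with $\deg f,g,h\le 3$. This is the Representation of Polynomials proposition, generalized in Theorem \ref{thm32}. The paper takes extremality from its standing setting $r=v=7$; \cite{CY14} secures it through the equivalence with $q_{LC}(Z,\bar Z)=0$.

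\textbf{The gap is the extremality step.} The hypotheses give only $M(3)\ge 0$, $M(2)>0$ and $q_7(Z,\bar Z)=0$. So a priori $r\in\{6,7,8\}$, and even when $r=7$ the third kernel vector need not vanish on all of $\mathcal{Z}(q_7)$. In the sufficiency direction you may not invoke $r\le v$, because that inequality is a consequence of the measure you are trying to produce. Hence ``$r\le v\le 7$ forces $v=7$'' is circular. If in fact $v<7$, then consistency on $\mathcal{Z}(q_7)$ is not what \cite{CFM08} requires.

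This is not a side issue. If $r=v=7$ were known, then $\mathcal{V}=\mathcal{Z}(q_7)$, and every kernel vector would be a degree-$\le 3$ element of $I(\mathscr{V})$, hence a combination of $q_7,\bar q_7,q_{LC}$. Both displayed conditions would then hold automatically. So the whole content of the sufficiency direction lies in the step you left open.

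\textbf{The missing idea is already latent in your last paragraph.} The two conditions force the column relation $q_{LC}(Z,\bar Z)=0$, i.e.\ $\Lambda(z^a\bar z^b q_{LC})=0$ for all $a+b\le 3$. This is (2)$\Rightarrow$(4) in Theorem \ref{cy14}. To see it:
\begin{enumerate}
\item Modulo $fq_7+g\bar q_7$ with $\deg f,g\le 2$, one computes $z^2q_{LC}\equiv i(t+u)q_{LC}$ and $z\bar z\,q_{LC}\equiv -(t+u)q_{LC}$.
\item One has $z^3q_{LC}=q_{LC}\,q_7+(itz+u\bar z)q_{LC}$, and multiplying the congruences in item 1 by $z$ or $\bar z$ keeps all multipliers of degree $\le 3$.
\item The identity $\overline{q_{LC}}=-iq_{LC}$ together with $\Lambda(\bar p)=\overline{\Lambda(p)}$ handles the conjugate monomials; for example $\Lambda(\bar z q_{LC})=\overline{-i\Lambda(zq_{LC})}$.
\end{enumerate}

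Since $q_7,\bar q_7,q_{LC}$ have distinct leading monomials, this is a third independent column relation. Consequently:
\begin{enumerate}
\item $r\le 7$, so rank $8$ is ruled out.
\item If $r=7$, the kernel is exactly spanned by $\hat q_7,\hat{\bar q}_7,\hat q_{LC}$. Hence $\mathcal{V}=\mathcal{Z}(q_7)\cap\mathcal{Z}(q_{LC})=\mathcal{Z}(q_7)$ and $v=7=r$.
\item If $r=6$, $M(3)$ is a flat extension of $M(2)>0$ and a measure exists outright.
\end{enumerate}

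Only after this does your consistency argument apply, and it then becomes clean. The set $\{q_7,\bar q_7,q_{LC}\}$ is already a Gr\"obner basis of $I(\mathscr{V})$: the leading monomials $z^3,w^3,z^2w$ leave exactly the seven standard monomials $1,z,w,z^2,zw,w^2,zw^2$, and a dimension count against $\dim\mathbb{C}[z,w]/I(\mathscr{V})=7$ forces equality. Graded division therefore gives $\deg h_j\le 3$ with no ``further elements''. Finally, $\Lambda(h_3q_{LC})=0$ because $q_{LC}(Z,\bar Z)=0$.
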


Since the associated TMP is extremal, what was needed to establish Theorem \ref%
{thmcubic} was a verification of consistency. \ To this end, \cite{CY14} proved
a result on representations of polynomials, as follows.

\begin{prop} \ (\textrm{Representation of Polynomials}) \ (\cite{CY14}) \
Let $\mathcal{Q}_{6}:=\{p\in \mathbb{C}_{6}[z,\bar{z}]:p|_{\mathcal{Z}%
(q_{7})}\equiv 0\}$ and let $\mathcal{I}:=\{p\in \mathbb{C}_{6}[z,\bar{z}%
]:p=fq_{7}+g\bar{q}_{7}+hq_{LC}$ for some $f,g,h\in \mathbb{C}_{3}[z,\bar{z}%
]\}$. \ Then $\mathcal{Q}_{6}=\mathcal{I}$.
\end{prop}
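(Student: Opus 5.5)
The inclusion $\mathcal{I}\subseteq\mathcal{Q}_6$ is immediate. Each of $q_7$, $\bar q_7$ and $q_{LC}$ vanishes on $\mathcal{Z}(q_7)$: for $q_{LC}$ this is the containment $\mathcal{Z}(q_7)\subseteq\mathcal{Z}(q_{LC})$ noted before Theorem \ref{thmcubic}. Moreover, multiplying by $f,g,h\in\mathbb{C}_3[z,\bar z]$ keeps the degree at most $6$. The work is in the reverse inclusion. I would prove it by a division (normal form) argument rather than a dimension count of syzygies: reduce an arbitrary $p\in\mathbb{C}_6[z,\bar z]$ modulo $\mathcal{I}$ to a remainder lying in a fixed $7$-dimensional space of monomials, and then show that no nonzero element of that space vanishes on the $7$ points.

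\emph{Step 1 (reduction by $q_7,\bar q_7$).} Since $q_7=z^3-itz-u\bar z$, any monomial $z^a\bar z^b$ with $a\ge 3$ and $a+b\le 6$ can be written as
\begin{equation*}
z^a\bar z^b = z^{a-3}\bar z^{b}\,q_7 + z^{a-3}\bar z^b\,(itz+u\bar z).
\end{equation*}
The multiplier has degree $a+b-3\le 3$, and the remainder has total degree $a+b-2$. Symmetrically, $\bar q_7=\bar z^3+it\bar z-uz$ handles $b\ge 3$. Since total degree strictly drops at each step, iterating terminates. The multipliers accumulated always have degree at most $3$, because each term being reduced has degree at most $6$. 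Hence every $p\in\mathbb{C}_6[z,\bar z]$ is congruent modulo $\mathcal{I}$ to a combination of the nine monomials $z^i\bar z^j$ with $0\le i,j\le 2$.

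\emph{Step 2 (reduction by $q_{LC}$).} Expanding gives
\begin{equation*}
q_{LC}=i z^2\bar z + z\bar z^2 - iuz - u\bar z ,
\end{equation*}
so $z^2\bar z\equiv i z\bar z^2 + uz - iu\bar z$ modulo $\mathcal{I}$. Multiplying by $\bar z$ (with $h=\bar z$, of degree $\le 3$) eliminates $z^2\bar z^2$ in favour of $z\bar z^3$ and lower terms. The term $z\bar z^3$ is then reduced back via $\bar q_7$ as in Step 1, and remains of degree $\le 3$ after reduction. The remainder therefore lies in
\begin{equation*}
W:=\operatorname{span}\{1,z,\bar z,z^2,z\bar z,\bar z^2,z\bar z^2\},
\end{equation*}
a space of dimension $7$. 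Throughout, I would track that every multiplier $f,g,h$ used stays in $\mathbb{C}_3[z,\bar z]$.

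\emph{Step 3 (injectivity on $W$).} Given $p\in\mathcal{Q}_6$, write $p=\iota+r$ with $\iota\in\mathcal{I}$ and $r\in W$. Then $r$ vanishes on the seven points of $\mathcal{Z}(q_7)$, and it remains to show $r=0$. Equivalently, the $7\times 7$ evaluation matrix of the basis of $W$ at the points of $\mathcal{Z}(q_7)$ is nonsingular. This is the main obstacle. I would use the explicit description of $\mathcal{Z}(q_7)$ and exploit its symmetries. The point set is invariant under $z\mapsto -z$ and under the reflection across the bisector $z=i\bar z$. So I would first split $r$ into its even and odd parts under $z\mapsto -z$: the even part lies in $\operatorname{span}\{1,z^2,z\bar z,\bar z^2\}$ and the odd part in $\operatorname{span}\{z,\bar z,z\bar z^2\}$. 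Each part must vanish separately on the symmetric set, which reduces the problem to a $4\times 4$ and a $3\times 3$ determinant. Evaluating at the origin kills the constant term. On the circle $|z|^2=u$ the monomials $z\bar z$ and $z\bar z^2$ reduce to $u$ and $u\bar z$. The remaining small determinants should then be nonzero precisely under the standing hypothesis $0<u<t<2u$, which guarantees that the seven points are distinct and in general enough position. If this direct computation became unwieldy, the fallback would be to verify that $\{q_7,\bar q_7,q_{LC}\}$ (suitably completed) is a Gr\"obner basis whose standard monomials are those of $W$, together with $\operatorname{card}\mathcal{Z}(q_7)=7$, so that $\dim W$ matches the number of points and the ideal is radical.
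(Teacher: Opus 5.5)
Your argument is correct, but it settles the key step differently from the paper. The paper quotes this proposition from the original source and proves only its general analogue, Theorem \ref{thm32}. There, a polynomial vanishing on $\mathcal{V}$ is reduced modulo the Gr\"obner basis. The remainder is then shown to be zero by a count (the number of standard monomials equals $v=\operatorname{rank} M(k)$) together with extremality (a nonzero remainder would be an extra column relation).

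Your Steps 1--2 amount to division by $q_7,\bar q_7,q_{LC}$, with leading monomials $z^3,\bar z^3,z^2\bar z$ in degree-lexicographic order with $z>\bar z$. The standard monomials are exactly the seven that span $W$. Step 3 then replaces the count and extremality by a direct check, and that check does work. Take the bisector points $\pm s(1+i)$ with $2s^2=t-u$, and the circle points $\pm w,\pm i\bar w$ with $w^2=\tfrac12(\sigma+it)$ and $\sigma=\sqrt{4u^2-t^2}>0$.
\begin{itemize}
\item Even block: once the origin kills the constant term and the factor $2s^2$ is removed, the determinant is $\pm(w^2+\bar w^2)(w^2-\bar w^2-2iu)=\pm i\sigma(t-2u)$.
\item Odd block: remove the factors $s$ and $i$, and subtract $u$ times the $\bar z$-column from the $z\bar z^2$-column; the determinant is then $\pm(1-i)\sigma(t-2u)$.
\end{itemize}
Both are nonzero because $u<t<2u$.

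Your route is self-contained and never touches the moment matrix. It also explicitly secures the degree bound $f,g,h\in\mathbb{C}_3[z,\bar z]$ that the statement requires, which the paper's argument leaves implicit. The paper's route is general (any $k$, any finite variety), but it leans on Gr\"obner-basis theory and on extremality.

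You could also skip the determinants. Steps 1--2 reduce every polynomial, of any degree, into $W$ modulo the ideal generated by $q_7,\bar q_7,q_{LC}$. By Lagrange interpolation, evaluation of analytic polynomials at seven distinct points is onto $\mathbb{C}^7$. Hence evaluation maps $W$ onto $\mathbb{C}^7$, and since $\dim W=7$ it is injective. This is essentially your fallback, and it uses only $\operatorname{card}\mathcal{Z}(q_7)=7$.
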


Using the above approach as a guiding principle, R.E. Curto and S. Yoo went on to complete the analysis of the extremal case for the sextic MP \cite{CY15}; this required an application of the Division Algorithm.

While R. Curto and S. Yoo focused on a rather narrow set of harmonic polynomials, our aim in this paper is to cover large swaths of polynomials in $\mathbb{C}[z,\overline{z}]$ of the form 
\begin{equation*}
    p(z,\overline{z}) = f(z) + \overline{g(z)},
\end{equation*}
that is, {\it the entire class of harmonic polynomials}, in the sense that their Laplacian relative to $z$ and $\overline{z}$ is identically zero. 

\subsection{Commutative Algebra}

In order to define a Gr\"obner basis, we first define a {\it monomial ordering}. 
\begin{defn}
Let $R = \mathbb{K}[x_1,\ldots,x_n]$ be a polynomial ring over a field $\mathbb{K}$. \ 
A monomial ordering on $R$ is a total ordering on the monomials of $R$. 
\end{defn}
We denote by $\mathrm{LM}{(p)}$ the {\it leading} monomial (or {\it initial} term) of the polynomial $p$ with respect to a monomial ordering $>$.
Given an ideal $I \subseteq R$, denote by $\mathrm{LM}{(I)}$ the {\it initial monomial ideal}; i.e., this is the ideal generated by all of the leading monomials of elements of $I$. 
For an arbitrary set $S \subseteq R$, we denote by $\mathrm{LM}{(S)}$ the leading monomial ideal of the ideal generated by $S$. 
\begin{defn}
Let $R = \mathbb{K}[x_1,\ldots,x_n]$ be a polynomial ring over an algebraically closed field $\mathbb{K}$, and fix a monomial ordering, $>$, on $R$. \ Let $I$ be an ideal in $R$ and let $G$ be a finite subset of $R$. \ If $\mathrm{LM}{(I)} = \mathrm{LM}{(G)}$, then $G$ is called a {\it Gr\"obner basis} for $I$. \ A Gr\"obner basis is said to be {\it minimal} if the leading monomial of every element of $G$ is irreducible by the other elements of $G$. 
\end{defn}
Note that in general, Gr\"obner bases are not unique. 
However, if we fix a monomial ordering on our polynomial ring, then {\it minimal} Gr\"obner bases are unique. 

Throughout this paper, when we refer to a Gr\"obner basis, we will mean a minimal Gr\"obner basis with respect to the degree lexicographic order for our polynomial ring. 

\par

The well-known Hilbert's Nullstellensatz, as well as the weak Nullstellensatz, are two key tools to the proof of one of our main results. 
\begin{thm}
        (Weak Nullstellensatz) Let $\mathbb{K}$ be algebraically closed and let $I \subseteq \mathbb{K}[x_1,\ldots, x_n]$ be an ideal. \ Define
        \begin{equation*}
                \mathbf{V}(I) := \{ a \in \mathbb{K}^n \, : \, f(a) = 0 \quad \textrm{ for all } f \in I \}.
        \end{equation*}
        If $\mathbf{V}(I) = \emptyset$, then $I = \mathbb{K}[x_1,\ldots, x_n]$. 
\end{thm}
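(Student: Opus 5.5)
The plan is to prove the contrapositive: if $I \neq \mathbb{K}[x_1,\ldots,x_n]$, then $\mathbf{V}(I) \neq \emptyset$. I would reduce this to the fact that every maximal ideal of $\mathbb{K}[x_1,\ldots,x_n]$ has the form $\mathfrak{m}_a=(x_1-a_1,\ldots,x_n-a_n)$ for a point $a\in\mathbb{K}^n$. Granting that, the argument is short. Since $I$ is proper, Zorn's lemma (or Noetherianity, via the Hilbert basis theorem) places $I$ inside some maximal ideal $\mathfrak{m}$. Writing $\mathfrak{m}=\mathfrak{m}_a$, every $f\in I$ lies in $\mathfrak{m}_a$. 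Because $f(a)$ is the remainder of $f$ on division by the $x_i-a_i$, this gives $f(a)=0$, so $a\in\mathbf{V}(I)$.

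To identify the maximal ideals, I would look at the field $L:=\mathbb{K}[x_1,\ldots,x_n]/\mathfrak{m}$, which is a finitely generated $\mathbb{K}$-algebra. The key input is Zariski's lemma: a field that is finitely generated as a $\mathbb{K}$-algebra is a finite (hence algebraic) extension of $\mathbb{K}$. Since $\mathbb{K}$ is algebraically closed, $L=\mathbb{K}$. Each $x_i$ therefore maps to some scalar $a_i\in\mathbb{K}$, so $x_i-a_i\in\mathfrak{m}$, giving $\mathfrak{m}_a\subseteq\mathfrak{m}$. Since $\mathfrak{m}_a$ is already maximal (its quotient is $\mathbb{K}$ via evaluation at $a$), we get $\mathfrak{m}=\mathfrak{m}_a$.

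The main obstacle is Zariski's lemma. For $\mathbb{K}=\mathbb{C}$, which is the case relevant to this paper, I would first try the cardinality trick. The ring $\mathbb{C}[x_1,\ldots,x_n]$ has countable dimension over $\mathbb{C}$, so $L$ does too. If some $t\in L$ were transcendental over $\mathbb{C}$, the elements $1/(t-c)$ for $c\in\mathbb{C}$ would be uncountably many $\mathbb{C}$-linearly independent elements of $L$, which is a contradiction. Hence $L$ is algebraic over $\mathbb{C}$, and so $L=\mathbb{C}$.

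For a general algebraically closed $\mathbb{K}$, I would fall back on the Artin–Tate argument, using induction on $n$ together with Noether normalization. The difficult case is a tower $\mathbb{K}\subseteq\mathbb{K}(y_1,\ldots,y_r)\subseteq L$ with $L$ finite over the middle field. Artin–Tate shows that $\mathbb{K}(y_1,\ldots,y_r)$ would then be a finitely generated $\mathbb{K}$-algebra. That is impossible when $r\ge1$: a finite set of rational functions has only finitely many irreducible factors in its denominators, and the reciprocal of an irreducible polynomial not among them cannot be reached.

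Since the paper only needs the complex case (the variables $z,\overline{z}$ treated as independent indeterminates over $\mathbb{C}$), I would present the countability argument and just cite the general statement.
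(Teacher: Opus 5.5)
Your argument is correct. The paper does not prove this statement: the Weak Nullstellensatz is quoted as classical background and is only ever used over $\mathbb{C}[z,w]$. So there is no argument of the paper's to compare yours against. What you give is the standard proof, reducing to maximal ideals via Zariski's lemma, and each step is sound:
\begin{itemize}
\item a proper ideal lies in a maximal ideal;
\item $\mathbb{K}[x_1,\ldots,x_n]/\mathfrak{m}\cong\mathbb{K}$;
\item $\mathfrak{m}=\mathfrak{m}_a$;
\item $f\in\mathfrak{m}_a$ if and only if $f(a)=0$.
\end{itemize}
Your countability argument for $\mathbb{C}$ is also complete. The elements $1/(t-c)$ are linearly independent by clearing denominators and using that $t$ is transcendental. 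A space with a countable spanning set cannot contain an uncountable linearly independent set.

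Two remarks on scope. First, the countability trick proves Zariski's lemma only for uncountable $\mathbb{K}$. It covers $\mathbb{C}$, which is all the paper needs, but not the statement as written, which allows any algebraically closed field (e.g.\ $\mathbb{K}=\overline{\mathbb{Q}}$). So the citation for general $\mathbb{K}$ is genuinely required, not just a convenience.

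Second, your sketch of the general case mixes two distinct standard proofs.
\begin{itemize}
\item The Artin--Tate route needs neither induction on $n$ nor Noether normalization. Choose a transcendence basis $y_1,\ldots,y_r$ among the generators of $L$, so that $L$ is finite over $\mathbb{K}(y_1,\ldots,y_r)$. Artin--Tate then makes $\mathbb{K}(y_1,\ldots,y_r)$ a finitely generated $\mathbb{K}$-algebra, and your denominator argument forces $r=0$. That argument needs an irreducible $h$ dividing none of the finitely many denominators; one exists, e.g.\ $h=y_1-c$ for suitable $c$, since $\mathbb{K}$ is infinite.
\item Noether normalization alone is an independent proof. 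If the field $L$ is integral over a polynomial subring $\mathbb{K}[y_1,\ldots,y_r]$, that subring must itself be a field, which forces $r=0$.
\end{itemize}
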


\begin{thm}
        (Hilbert's Nullstellensatz)  \ Let $\mathbb{K}$ be algebraically closed, and let $V$ be an algebraic subset of $\mathbb{K}^n$. \ Define $\mathbf{I}(V) := \{ f \in \mathbb{K}[x_1,\ldots,x_n] \, : \, f(a) = 0 \quad \textrm{ for all } a \in V \}$. \ For $f_1,\ldots, f_s \in \mathbb{K}[x_1,\ldots,x_n]$, let $(f_1,\ldots,f_s)$ denote the ideal generated by $f_1,\ldots,f_s$. \ Then $f \in \mathbf{I}(\mathbf{V}((f_1,\ldots,f_s)))$ if and only if $f^r \in (f_1,\ldots,f_s)$ for some integer $r \geq 1$. 
\end{thm}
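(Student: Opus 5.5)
The statement is the classical Hilbert Nullstellensatz, and I would derive it from the Weak Nullstellensatz stated just before it, using the Rabinowitsch trick. Write $I=(f_1,\ldots,f_s)$ and $V=\mathbf{V}(I)$.

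\emph{The ``if'' direction.} This is immediate. Suppose $f^r=\sum_i h_i f_i$ for some $h_i \in \mathbb{K}[x_1,\ldots,x_n]$. Then for every $a\in V$ we have $f(a)^r=0$. Since $\mathbb{K}$ is a field, this gives $f(a)=0$, so $f\in\mathbf{I}(V)$.

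\emph{The ``only if'' direction.} Suppose $f\in\mathbf{I}(V)$.
\begin{enumerate}
\item If $f=0$ there is nothing to prove, since $f^1=0\in I$. So assume $f\neq 0$.
\item Adjoin a new variable $y$ and consider the ideal
\[
J:=(f_1,\ldots,f_s,\,1-yf)\subseteq \mathbb{K}[x_1,\ldots,x_n,y].
\]
\item Show $\mathbf{V}(J)=\emptyset$. Let $(a,b)\in\mathbb{K}^{n+1}$ be a common zero of $f_1,\ldots,f_s$. Then $a\in V$, so $f(a)=0$ by hypothesis. Hence $1-bf(a)=1\neq 0$, and $(a,b)$ is not a zero of $1-yf$.
\item Apply the Weak Nullstellensatz in $n+1$ variables (valid since $\mathbb{K}$ is algebraically closed). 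It gives $J=\mathbb{K}[x_1,\ldots,x_n,y]$, so there are polynomials $p_1,\ldots,p_s,q$ with
\[
1=\sum_{i=1}^s p_i(x,y)f_i(x)+q(x,y)\bigl(1-yf(x)\bigr).
\]
\end{enumerate}

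\emph{Clearing the auxiliary variable.} Consider the $\mathbb{K}[x_1,\ldots,x_n]$-algebra homomorphism
\[
\mathbb{K}[x_1,\ldots,x_n,y]\to\mathbb{K}(x_1,\ldots,x_n),\qquad y\mapsto 1/f .
\]
This map is well defined precisely because $f\neq 0$ in the integral domain $\mathbb{K}[x_1,\ldots,x_n]$. Under it the term $1-yf$ maps to $0$, so we obtain
\[
1=\sum_i p_i(x,1/f)\,f_i \quad \text{in } \mathbb{K}(x_1,\ldots,x_n).
\]
Let $r\ge 1$ be at least the maximum $y$-degree of the $p_i$. Multiplying through by $f^r$ makes each $f^r p_i(x,1/f)$ a genuine polynomial $h_i\in\mathbb{K}[x_1,\ldots,x_n]$. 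This yields $f^r=\sum_i h_i f_i\in I$, as required.

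\emph{Main obstacle.} The only delicate point is justifying this last step: the substitution $y\mapsto 1/f$ must be a ring homomorphism into the fraction field. That is why the case $f=0$ is set aside at the start. After the substitution, $r$ must be chosen large enough that every negative power of $f$ is cleared. The rest is bookkeeping.
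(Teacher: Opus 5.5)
The paper does not prove this statement; it quotes the Nullstellensatz as standard background, right after the Weak Nullstellensatz, and uses it as a black box, so there is no argument of the authors' to compare yours against. Your proof is the standard Rabinowitsch-trick reduction of the strong form to the weak form, and it is correct as written. In particular, you set aside $f=0$ so that $y\mapsto 1/f$ is a well-defined homomorphism into $\mathbb{K}(x_1,\ldots,x_n)$, and you take $r\ge 1$ at least the maximal $y$-degree of the $p_i$, which clears all denominators.
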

There are two important consequences of the above theorems. \ First, every ideal in a polynomial ring corresponds to an algebraic variety. \ Second, $\mathbf{I}(\mathbf{V}(I)) = \sqrt{I}$ for every ideal $I$ of a polynomial ring. 

\par

The following theorem gives a canonical basis for the quotient ring of a polynomial ring by an ideal of that ring. 
\begin{thm}[\cite{Eisenbud95}, Theorem 15.3] 
        Let $R = \mathbb{K}[x_1,\ldots,x_n]$ and $I \subseteq R$ be any ideal. \ 
        Then for any monomial order $>$ on $R$, the set $B$ of all monomials not in $\mathrm{LM}{(I)}$ forms a basis for $R/I$. 
\end{thm}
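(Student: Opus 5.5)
The proof has two parts: the monomials outside $\mathrm{LM}(I)$ span $R/I$, and they are linearly independent there. The tool is the multivariate division algorithm relative to a Gr\"obner basis. Fix the monomial order $>$ and a Gr\"obner basis $G=\{g_1,\ldots,g_s\}$ of $I$, so that $\mathrm{LM}(I)=\mathrm{LM}(G)=(\mathrm{LM}(g_1),\ldots,\mathrm{LM}(g_s))$. If one prefers not to assume that a finite Gr\"obner basis exists, one can use Dickson's lemma: the monomial ideal $\mathrm{LM}(I)$ is generated by finitely many leading monomials $\mathrm{LM}(g_i)$ with $g_i\in I$, and this finite set is then a Gr\"obner basis by the definition above.

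\emph{Spanning.} Take $f\in R$ and reduce it as follows. If some term $c\,m$ of $f$ has $m\in \mathrm{LM}(I)$, choose the $>$-largest such term. Since $\mathrm{LM}(I)$ is a monomial ideal, $m$ is divisible by some $\mathrm{LM}(g_i)$, say $m=m'\,\mathrm{LM}(g_i)$. Replace $f$ by $f - \frac{c}{\mathrm{lc}(g_i)} m' g_i$; this changes $f$ only by an element of $I$. The term $c\,m$ cancels, and every new term is $<m$ because a monomial order is multiplicative. So the largest monomial of $f$ lying in $\mathrm{LM}(I)$ strictly decreases at each step. A monomial order on $R$ is a well-ordering, again by Dickson's lemma, so the process terminates. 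The result is $f\equiv r \pmod I$, with $r$ a linear combination of monomials in $B$. Hence $B$ spans $R/I$.

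\emph{Linear independence.} Suppose $\sum c_m m\in I$ is a nontrivial finite combination of monomials $m\in B$. Its leading monomial is one of these $m$, and it lies in $\mathrm{LM}(I)$ because the polynomial is in $I$. This contradicts $m\notin \mathrm{LM}(I)$. Therefore the images of $B$ are linearly independent in $R/I$, and $B$ is a basis.

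The only delicate point is the termination of the reduction in the spanning step. It rests on two facts: monomial orders are well-orders, and $\mathrm{LM}(I)$ is a monomial ideal, so membership of a monomial means divisibility by one of finitely many generators. Both come from Dickson's lemma, which I would quote rather than reprove. The independence half is immediate from the definition of $\mathrm{LM}(I)$.
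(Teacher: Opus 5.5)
Your proof is correct and is the standard argument. The paper gives no proof of its own but cites Eisenbud, whose proof of Macaulay's theorem runs the same way: independence holds because a nonzero combination of monomials in $B$ has its leading monomial in $B$, hence lies outside $I$, and spanning follows by cancelling leading terms that lie in $\mathrm{LM}(I)$, which terminates because monomial orders are well-orders. You rightly rely on multiplicativity and on $m>1$ for $m\neq 1$ (the source of the well-ordering via Dickson's lemma); these are part of Eisenbud's definition of a monomial order but are missing from the paper's one-line definition ``a total ordering on the monomials,'' and without them the spanning half fails, e.g.\ for $1>x>x^2>\cdots$ on $\mathbb{K}[x]$ with $I=(x-1)$, where $\mathrm{LM}(I)=R$ but $R/I\cong\mathbb{K}$.
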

The following theorem and proposition relate the size of a finite variety with the quotient of a polynomial ring by an ideal. 
\begin{thm}[\cite{CLO15} Theorem 5.3.6]
    Suppose that $R = \mathbb{K}[x_1,\ldots,x_n]$ and $I \subseteq R$ is an ideal. \ Then the following are equivalent.
    \begin{enumerate}
            \item{
                            For every $i = 1, \ldots, n$, there exists some integer $\alpha_i$ so that $x_{i}^{\alpha_i} \in \mathrm{LM}{(I)}$.
                    }
            \item{
                            The dimension of $R/I$ as a $\mathbb{K}$-vector space is finite.
                    }
            \item{
                            $\mathscr{V} = \mathscr{V}(I)$ is a finite set.
                    }
    \end{enumerate}
\end{thm}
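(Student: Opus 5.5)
The plan is to prove the cycle of implications $(1)\Rightarrow(2)\Rightarrow(3)\Rightarrow(1)$, together with the shortcut $(2)\Rightarrow(1)$. I assume, as in the surrounding discussion, that $\mathbb{K}$ is algebraically closed; this is needed only for the step $(3)\Rightarrow(1)$. I also take the monomial order to be a genuine monomial order: a well-order compatible with multiplication. Compatibility gives $x_i^m > x_i^j$ whenever $m > j$, because $1 < x_i$ and multiplying through preserves strict inequalities.

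\emph{$(1)\Rightarrow(2)$.} Suppose $x_i^{\alpha_i}\in \mathrm{LM}(I)$ for each $i$. Then any monomial $x^\beta$ with some $\beta_i\ge \alpha_i$ is divisible by $x_i^{\alpha_i}$, so it lies in the monomial ideal $\mathrm{LM}(I)$. Hence every monomial outside $\mathrm{LM}(I)$ satisfies $\beta_i<\alpha_i$ for all $i$, and there are at most $\prod_i \alpha_i$ of them. By the basis theorem quoted above (\cite{Eisenbud95}, Theorem 15.3), these monomials form a $\mathbb{K}$-basis of $R/I$, so $\dim_{\mathbb{K}} R/I<\infty$.

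\emph{$(2)\Rightarrow(1)$ and $(2)\Rightarrow(3)$.} Suppose $\dim R/I = N$ is finite. Fix $i$. The classes of $1,x_i,\dots,x_i^{N}$ are $N+1$ elements of an $N$-dimensional space, so they are linearly dependent in $R/I$. This produces a nonzero univariate polynomial $f_i(x_i)\in I$. By the observation about the order, its leading monomial is a pure power $x_i^{m_i}$, and $x_i^{m_i}\in\mathrm{LM}(I)$; this gives $(1)$. Moreover, every $a\in\mathbf{V}(I)$ satisfies $f_i(a_i)=0$. So the $i$-th coordinate of $a$ ranges over the finitely many roots of $f_i$, and $\mathbf{V}(I)$ lies in a finite grid. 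This gives $(3)$.

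\emph{$(3)\Rightarrow(1)$.} This is the step I expect to be the only real obstacle, since it needs the Nullstellensatz. If $\mathbf{V}(I)=\emptyset$, the Weak Nullstellensatz gives $I=R$, so $1\in I$ and $(1)$ is trivial. Otherwise write $\mathbf{V}(I)=\{a^{(1)},\dots,a^{(s)}\}$ and set
\[
g_i:=\prod_{j=1}^{s}\bigl(x_i-a^{(j)}_i\bigr).
\]
This polynomial vanishes on $\mathbf{V}(I)$. By Hilbert's Nullstellensatz, $g_i^{r}\in I$ for some $r\ge1$. Since $g_i^r$ is a nonzero polynomial in $x_i$ alone, its leading monomial is $x_i^{rs}$, so $x_i^{rs}\in \mathrm{LM}(I)$. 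This proves $(1)$ and closes the cycle.
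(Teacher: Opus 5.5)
Your proof is correct and is essentially the standard argument from \cite{CLO15}; the paper itself only cites this result and gives no proof. It uses the monomial basis of $R/I$ for (1)$\Rightarrow$(2), a linear dependence among $1,x_i,\dots,x_i^N$ in $R/I$ for (2)$\Rightarrow$(1) and (2)$\Rightarrow$(3), and the Nullstellensatz applied to $\prod_j\bigl(x_i-a_i^{(j)}\bigr)$ for (3)$\Rightarrow$(1). You were also right to impose algebraic closedness and a genuine monomial order (a multiplicative well-order), which the statement as quoted omits; without algebraic closedness, (3)$\Rightarrow$(1) fails, e.g.\ for $I=(x^2+1)\subseteq\mathbb{R}[x,y]$, where $\mathbf{V}(I)=\emptyset$ but $\dim_{\mathbb{R}} R/I=\infty$.
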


\begin{prop}[\cite{CLO15} Proposition 5.3.7]
        Let $R$, $I$, and $\mathscr{V} = \mathscr{V}(I)$ be as before, and suppose that for each $i = 1, \ldots, n$, we have $x_i^{\alpha_i} \in \mathrm{LM}{(I)}$, where $\alpha_i \in \mathbb{N}_0$. \ Then:
        \begin{enumerate}
                \item{
                                The number of points of $\mathscr{V}$ is at most $\dim_{\mathbb{K}}{R/I}$. 
                        }
                \item{
                                If $I$ is radical and $\mathbb{K}$ is algebraically closed, then equality holds in the previous part. \ That is, the number of points in $\mathscr{V}$ is exactly $\dim_{\mathbb{K}}{R/I}$. \ Put another way, the number of points in $\mathscr{V}$ is equal to the number of monomials {\it not} divisible by any leading monomial in $G$.
                        }
        \end{enumerate}
\end{prop}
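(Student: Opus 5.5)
The statement is Proposition 5.3.7 of \cite{CLO15}. The plan is to use the monomial basis of $R/I$ from \cite{Eisenbud95}, Theorem 15.3, together with the finiteness criterion of \cite{CLO15}, Theorem 5.3.6, and to compare $R/I$ with the ring of functions on $\mathscr{V}$ through an evaluation map.

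Since $x_i^{\alpha_i}\in \mathrm{LM}(I)$ for each $i$, the set $B$ of monomials not in $\mathrm{LM}(I)$ is finite. Indeed, any such monomial has $x_i$-exponent below $\alpha_i$ for each $i$. By Theorem 15.3, $\dim_{\mathbb{K}} R/I = |B| < \infty$, and by Theorem 5.3.6, $\mathscr{V}$ is finite; write $\mathscr{V}=\{p_1,\dots,p_m\}$.

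For part 1, I would choose polynomials $f_1,\dots,f_m$ with $f_j(p_k)=\delta_{jk}$. These can be built as products of linear forms: for each $k\ne j$, pick a coordinate where $p_j$ and $p_k$ differ and divide by the resulting nonzero constant. I claim their classes in $R/I$ are linearly independent. Suppose $\sum c_j f_j\in I$. Every element of $I$ vanishes on $\mathscr{V}$, so evaluating at $p_k$ gives $c_k=0$. Hence $m\le \dim_{\mathbb{K}} R/I$. This part needs neither algebraic closedness nor radicality.

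For part 2, assume $I$ is radical and $\mathbb{K}$ is algebraically closed. Consider the evaluation map $\phi: R/I\to \mathbb{K}^m$, $f\mapsto (f(p_1),\dots,f(p_m))$, which is well defined because $I$ vanishes on $\mathscr{V}$. It is surjective, since it hits each standard basis vector via the $f_j$. For injectivity, suppose $f$ vanishes on $\mathscr{V}=\mathbf{V}(I)$. By Hilbert's Nullstellensatz, $f\in \mathbf{I}(\mathbf{V}(I))=\sqrt{I}=I$, so $f=0$ in $R/I$. Thus $\phi$ is an isomorphism and $m=\dim_{\mathbb{K}}R/I$.

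The final rephrasing needs that $\mathrm{LM}(I)=\mathrm{LM}(G)$ for a Gr\"obner basis $G$. A monomial lies in the monomial ideal $\mathrm{LM}(G)$ exactly when it is divisible by some leading monomial of an element of $G$. So $B$ is precisely the set of monomials not divisible by any leading monomial in $G$, and $|B|=m$.

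The main obstacle is the injectivity step in part 2. It requires the Nullstellensatz, and hence algebraic closedness, together with radicality. Without radicality, nilpotents make the dimension strictly exceed $m$; without algebraic closedness, $\mathbf{V}(I)$ can miss points. Everything else is routine linear algebra.
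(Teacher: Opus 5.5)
Your proof is correct and follows essentially the standard argument of \cite{CLO15}, which the paper invokes by citation without giving a proof of its own. In that argument, the interpolating polynomials $f_j$ with $f_j(p_k)=\delta_{jk}$ give linear independence in $R/I$, and the Nullstellensatz together with $\sqrt{I}=I$ gives spanning; your evaluation-map isomorphism is a repackaging of this, and in the final rephrasing you correctly read $\mathrm{LM}(G)$ as the ideal generated by the leading monomials of the elements of $G$.
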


\section{Preliminary Results}

In order to apply many of the tools from commutative algebra, it is often useful to consider $z$ and $\overline{z}$ as ``formally independent'' from one another. \ 
That is, rather than considering $p \in \C[z,\overline{z}]$ as a function of a single variable $z \in \C$, we consider $p$ to be an element of $\C[z,w]$, i.e., a function of two variables, $(z,w) \in \C^2$. \ 
This can be summarized by the following commutative diagram:
\begin{figure}[ht]
\centering
\begin{tikzcd}
	{\mathbb{C}[z,\overline{z}]} && {\mathbb{C}[z,w]} \\
	\\
	&& {\mathbb{C}[z,w]/\sim}
	\arrow["{\varphi}", from=1-1, to=1-3]
	\arrow[from=1-1, to=3-3]
	\arrow["{\pi}", from=1-3, to=3-3]
\end{tikzcd}
\end{figure}

\noindent where $w \sim \overline{z}$. The map $\varphi : \C[z,\overline{z}] \to \C[z,w]$ is the algebra homomorphism that sends $z \mapsto z$ and $\overline{z} \mapsto w$, and $\pi: \C[z,w] \to \C[z,w]/\sim$ is the map that sends $z \mapsto z$ and $w\mapsto \overline{z}$.

\par

We can now make precise the idea of ``considering $z$ and $\overline{z}$ as formally independent.'' \ 
For $p \in \C[z,\overline{z}]$, consider the image of $p$ under $\varphi$; this will allow us to use the tools of commutative algebra discussed previously. \ 
We denote by $\overline{p}(z,w)$ the image of $\overline{p(z,\overline{z})}$ under $\varphi$. \ 
Additionally, it makes sense to consider the image of $\mathcal{V}$ under the map $\psi : \C \to \C^2$ sending $z \in \C$ to $(z,\overline{z}) \in \C^2$. \ 
Now put $\mathscr{V}:= \psi(\mathcal{V})$, and let us employ the tools from commutative algebra on polynomials in two indeterminates. \ 
To obtain the desired moment results, we take the quotient by the relation $w \sim \overline{z}$.

\par

A crucial result in moment theory was obtained by J. Stochel and F. Szafraniec in 1994. 

\begin{thm}{(\cite{SS94}, Proposition 1)}
        Let $p \in \mathbb{C}[z,\overline{z}]$ and let $\gamma:\mathbb{N}^2 \to \mathbb{C}$ be a complex moment sequence with representing measure $\mu$. \ 
        Then $\gamma$ is positive semidefinite and the following conditions are equivalent:
        \begin{enumerate}
            \item{the (closed) support of $\mu$ is contained in $\mathcal{Z}(p)$;}
            \item{$\Lambda(p) = 0$.}
        \end{enumerate}
\end{thm}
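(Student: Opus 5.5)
The plan has three parts: the positive semidefiniteness claim, the implication (1)$\Rightarrow$(2), and the converse, which carries all the difficulty.

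\emph{Positive semidefiniteness.} For any polynomial $h$ with coefficient vector $\hat h$, the functional calculus gives $\langle M\hat h,\hat h\rangle=\Lambda(\overline{h}h)$. Since $\mu$ represents $\gamma$, this equals $\int_{\mathbb{C}}|h(z,\overline{z})|^2\,d\mu\ge 0$. This holds at every truncation level, so $\gamma$ is positive semidefinite.

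\emph{(1)$\Rightarrow$(2).} If $\operatorname{supp}\mu\subseteq\mathcal{Z}(p)$, then $p$ vanishes $\mu$-almost everywhere. Since $\mu$ represents $\gamma$, we get $\Lambda(p)=\int p\,d\mu=0$. This is the same argument that gives the consistency condition \eqref{consistency}.

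\emph{(2)$\Rightarrow$(1).} The natural route is to show that $\int p\,d\mu=0$ forces $p\equiv 0$ on $\operatorname{supp}\mu$. That requires $p$ to have constant sign on $\operatorname{supp}\mu$, and this is the main obstacle. For a general $p$ the implication is false: with $p=z$ and $\mu=\tfrac12(\delta_1+\delta_{-1})$, we have $\Lambda(p)=0$ but $\operatorname{supp}\mu\not\subseteq\mathcal{Z}(z)$. I would therefore prove (2)$\Rightarrow$(1) for $p$ nonnegative on $\mathbb{C}$, which is the setting in which the result is used here.

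Assume $p\ge 0$ and $\Lambda(p)=\int p\,d\mu=0$. Then $p=0$ $\mu$-a.e., so $\mu(\{p>0\})=0$. Because $p$ is continuous, $\{p>0\}$ is open. An open $\mu$-null set is disjoint from the closed support, so $\operatorname{supp}\mu\subseteq\{p=0\}=\mathcal{Z}(p)$.

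For an arbitrary polynomial $q$, apply this to $p=\overline{q}q=|q|^2$. Since $\mathcal{Z}(\overline{q}q)=\mathcal{Z}(q)$, this gives $\operatorname{supp}\mu\subseteq\mathcal{Z}(q)$ if and only if $\Lambda(\overline{q}q)=0$. The positivity step above shows that $\Lambda(\overline{q}q)=\langle M\hat q,\hat q\rangle$. For $M\ge 0$, this vanishes exactly when $M\hat q=0$, i.e. when $q(Z,\overline{Z})=0$ is a column relation. This is the form in which the result feeds the Stochel–Szafraniec inclusion $\operatorname{supp}\mu\subseteq\mathcal{V}$.
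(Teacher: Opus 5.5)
The paper gives no proof of this statement; it is quoted from \cite{SS94}, so there is no argument of the authors to compare yours against.

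Your proof is correct, and your key observation is right. As printed, (2)$\Rightarrow$(1) fails for general $p$, and $p=z$, $\mu=\tfrac12(\delta_1+\delta_{-1})$ is a valid counterexample. Your repair is the appropriate one. For $p\ge 0$ the open set $\{p>0\}$ is $\mu$-null and therefore misses $\operatorname{supp}\mu$. Specializing to $p=\overline{q}q$, and using $\Lambda(\overline{q}q)=\langle M\hat q,\hat q\rangle$ with $M\ge 0$, gives $\operatorname{supp}\mu\subseteq\mathcal{Z}(q)\iff\Lambda(\overline{q}q)=0\iff q(Z,\overline{Z})=0$.

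This $\overline{p}p$ version is presumably what the cited proposition intends. It is exactly what the paper needs to get $\operatorname{supp}\mu\subseteq\mathcal{V}$ from column relations. The consistency condition \eqref{consistency} uses only the true direction (1)$\Rightarrow$(2).

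Your example has a further consequence worth noting. It also refutes the general claim ``$g\in\ker\Lambda$ if and only if $g(Z,\overline{Z})\equiv 0$'': there $\Lambda(z)=0$, yet the column $Z$ is nonzero because $\Lambda(z\overline{z})=1$. That claim is the justification offered for one of the implications in the proof of Theorem~\ref{mainthm}.
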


\par

To state one of our main results, we first need to briefly describe the main result in \cite{CY14}; there, the authors focused on a particular family of harmonic polynomials, as follows. \ For $u,t \in \mathbb{R}$, put
\begin{align*}
        q_{7}(z,\overline{z}) &:= z^3 - i t z - u \overline{z} \\
        q_{LC}(z,\overline{z}) &:= i (z - i \overline{z})(\overline{z}z - u).
\end{align*}

\begin{thm}{(\cite{CY14}, Theorem 3.1)} \label{cy14}
        Let $M(3) \geq 0$, with $M(2) > 0$. \ 
        Let $q_7(Z,\overline{Z}) \equiv 0$ with $0 < |u| < t < 2|u|$. \ 
        With $q_{LC}$ defined as before, the following statements are equivalent:
        \begin{enumerate}
                \item{there exists a representing measure for $M(3)$;}
                \item{
                                $
                                \begin{cases}
                                        \varLambda(q_{LC}) = 0, \\
                                        \varLambda(zq_{LC}) = 0;
                                \end{cases}
                                $
                        }
                \item{
                                $
                                \begin{cases}
                                        \Re{(\gamma_{12})} - \Im{(\gamma_{12})} = u ( \Re{(\gamma_{01})} - \Im{(\gamma_{01})}), \\
                                        \gamma_{22} = (t+u) \gamma_{11} - 2u \Im{(\gamma_{02})};
                                \end{cases}
                                $
                        }
                \item{
                                $q_{LC}(Z,\overline{Z}) \equiv 0.$
                        }
        \end{enumerate}
\end{thm}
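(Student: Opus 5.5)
The plan is to use the extremal-case theory together with a direct analysis of $\mathcal{Z}(q_7)$. First, since $0<|u|<t<2|u|$, the set $\mathcal{Z}(q_7)$ has exactly $7$ points (by the Wilmshurst/Khavinson--\v{S}wiatek bound and the explicit description in Figure \ref{figurecubic}). Because $q_7(Z,\overline{Z})=0$, $\bar q_7(Z,\overline Z)=0$ and $M(2)>0$, we get $\operatorname{rank} M(3)\ge 6$ and $\mathcal{V}\subseteq\mathcal{Z}(q_7)$. The ten columns of $M(3)$ are subject to $Z^3$ and $\overline{Z}^3$ being expressed in lower terms, so $\operatorname{rank}M(3)\le 8$. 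The goal is to show that every one of the four statements forces, or is compatible with, $r=v=7$, and then to invoke the Curto--Fialkow--M\"oller theorem \cite{CFM08}: in the extremal case, positivity plus consistency is sufficient.

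For $(1)\Rightarrow(2)$, I will check directly that $q_{LC}$ vanishes on the $7$ points: the origin and $\pm$ the two bisector points lie on $z=i\bar z$, and the four remaining points lie on $|z|^2=u$. Hence $zq_{LC}$ also vanishes on $\mathcal{Z}(q_7)\supseteq\operatorname{supp}\mu$, and \eqref{consistency} gives the two vanishing conditions.

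For $(2)\Leftrightarrow(3)$, I will expand $\Lambda(q_{LC})$ and $\Lambda(zq_{LC})$ in moments. Reducing $\gamma_{13},\gamma_{03}$, etc., via the relation $q_7$ (that is, $\Lambda(\bar z^i z^j q_7)=0$ for $i+j\le 3$) turns these two complex equations into the two stated real and complex identities. This step is routine bookkeeping.

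For $(2)\Leftrightarrow(4)$, I will use the standard fact that for $M(3)\ge 0$, $q_{LC}(Z,\overline Z)=0$ if and only if $\langle M(3)\hat q_{LC},\hat q_{LC}\rangle=0$. This quantity equals $\Lambda(|q_{LC}|^2)$, a degree-$6$ expression. I will reduce $|q_{LC}|^2$ modulo the ideal generated by $q_7$ and $\bar q_7$ to a combination of $q_{LC}$, $zq_{LC}$ and their conjugates, with coefficients of degree zero. Alternatively, I can note that the column $q_{LC}(Z,\overline Z)$ has entries $\Lambda(\bar z^i z^j q_{LC})$, and show each of these is a combination of $\Lambda(q_{LC})$, $\Lambda(zq_{LC})$ and their conjugates using $q_7$-relations.

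For $(4)\Rightarrow(1)$, the relations $q_7,\bar q_7,q_{LC}$ cut the rank to $7$, while $v\le 7$ and $r\le v$ give $r=v=7$. Consistency then follows from the Representation of Polynomials proposition: any $p\in\mathbb{C}_6[z,\bar z]$ vanishing on $\mathcal{Z}(q_7)$ is $fq_7+g\bar q_7+hq_{LC}$ with $\deg f,g,h\le 3$. Each such term has $\Lambda=0$ by the column relations (recursive generation for $\deg\le 3$ multipliers). The main obstacle is this representation step, together with verifying that $\mathcal V$ has all $7$ points so that $r=v$. For the representation I would compute a Gröbner basis of $(q_7,\bar q_7,q_{LC})$ in $\mathbb{C}[z,w]$ under deglex. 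I would then check that its standard monomials number $7$, so the ideal is radical and equals $\mathbf I(\mathscr V)$, and that the division algorithm from degree $\le 6$ uses multipliers of degree $\le 3$.
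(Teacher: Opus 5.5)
\textbf{Overall.} Your route matches the one the paper describes for \cite{CY14}: extremality, then consistency via the Representation of Polynomials, then \cite{CFM08}. You carry out the representation step with the Gr\"obner-basis mechanism of Theorems~\ref{thm31}--\ref{thm32}. Your arguments for $(1)\Rightarrow(2)$, $(2)\Leftrightarrow(3)$ and $(2)\Leftrightarrow(4)$ are sound. Your ``alternatively'' plan for $(2)\Rightarrow(4)$ works for the following reason. The set $\{q_7,\bar q_7,q_{LC}\}$ is already a deglex Gr\"obner basis with seven standard monomials, while $\{q_7,\bar q_7\}$ has nine. So modulo $(q_7,\bar q_7)$ every multiple of $q_{LC}$ lies in a $2$-dimensional space, spanned by the normal forms of $q_{LC}$ and $zq_{LC}$. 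Moreover, dividing a degree-$6$ polynomial uses only multipliers of degree at most $3$.

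\textbf{The gap is in $(4)\Rightarrow(1)$.} You derive $r=v=7$ from ``$v\le 7$ and $r\le v$''.
\begin{itemize}
\item The inequality $r\le v$ is one of the necessary conditions for a measure, not a consequence of $M(3)\ge 0$, so using it to prove (1) is circular.
\item Even granting it, the upper bounds $r\le v\le 7$ cannot yield $r=7$.
\end{itemize}
What you actually have, from $M(2)>0$ and the three independent relations, is $6\le r\le 7$. Split into two cases.
\begin{itemize}
\item If $r=7$, then $\ker M(3)$ is exactly spanned by the coefficient vectors of $q_7,\bar q_7,q_{LC}$. Hence $\mathcal V=\mathcal Z(q_7)\cap\mathcal Z(q_{LC})=\mathcal Z(q_7)$ and $v=7=r$, so your consistency argument plus \cite{CFM08} finishes.
\item The case $r=6$ genuinely occurs under the hypotheses. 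Take $u=5$, $t=8$ and put positive weights on $\pm(1+2i)$, $\pm(2+i)$, $0$ and $\sqrt{3/2}\,(1+i)$. No nonzero polynomial of degree at most $2$ vanishes on these six points, so $M(2)>0$, and here $r=v=6$ with $\mathcal V\subsetneq\mathcal Z(q_7)$. Your representation by $q_7,\bar q_7,q_{LC}$ only covers polynomials vanishing on all seven points. That is not what consistency relative to this smaller $\mathcal V$ requires.
\end{itemize}
This case must be handled separately. Since $\operatorname{rank}M(3)=\operatorname{rank}M(2)$, $M(3)$ is a flat extension of $M(2)$. The Curto--Fialkow flat extension theorem \cite{CF96} then gives a $6$-atomic representing measure directly. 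The paper's one-line sketch (``since the associated TMP is extremal'') passes over the same point.

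\textbf{A smaller issue.} Your count of exactly seven zeros needs $u>0$, i.e.\ the range $0<u<t<2u$ of Figure~\ref{figurecubic}. For $u<0$ the circle $|z|^2=u$ is empty, and $\mathcal Z(q_7)$ consists of only three points, all on the bisector.
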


In the next section, we will describe our main results; as a starting point, we need to prove a generalization of \cite[Theorem 3.1]{CY14}.

\section{Main Results}

In this section, we state and prove the main results of this paper. \ First, we prove a generalization of \cite{CY14} (Theorem 3.1). \ 
In order to establish Theorem \ref{mainthm}, we first state and prove two key results. \ Toward this goal, we need to discuss a motivating example.

Consider a member of the polynomial family studied in \cite{CY14} (with suitable coefficients chosen), $q_7(z,\overline{z}) = z^3 - 8iz + 5 \overline{z}$.
This polynomial has seven roots, 
\begin{equation*}
    \mathcal{Z}{(q_7)} = \left\{ 0, \pm(1 + 2i), \pm(2 + i), \pm\sqrt{\frac{3}{2}} (1 + i) \right\}.
\end{equation*}
Next, consider the image of $q_7$ under the map $\varphi$ defined above, that is, $q_7(z,w) = z^3 - 8i z + 5w$.\ Now let
\begin{equation*}
    I := \left\{ p \in \mathbb{C}[z,w] \, : \, p \vert_{\mathscr{V}} \equiv 0 \right\}.
\end{equation*}
If we compute the minimal Gr\"obner basis $G$ with respect to the degree lexicographic order on $\mathbb{C}[z,w]$ for $I$, we find that $G$ contains $q_7$, $\overline{q_7}$, {\it and} the polynomial which was denoted $q_{LC}$ in \cite{CY14}, 
\begin{equation*}
    q_{LC}(z,w) = i(z - i w)(w z - u)
\end{equation*}
This example inspires the following theorem.

\begin{thm} \label{thm31}
Suppose $M(k) \geq 0$, with a column relation $p(Z,\overline{Z}) = 0$ given by a harmonic polynomial $p \in \mathbb{C}[z,\overline{z}]_{k}$. \ Let $\mathcal{V}$ be the algebraic variety associated with $M(k)$ and let $G$ the Gr\"obner basis for the ideal associated to $\mathcal{V}$ (considered as a subset of $\mathbb{C}^2$). \ Then the cardinality of $G$ is equal to the nullity of $M(k)$. \ In particular, the elements of $G$ correspond to the column relations of $M(k)$. 
\end{thm}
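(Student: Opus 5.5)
The plan is to identify the kernel of $M(k)$ with the degree-$\le k$ part of the vanishing ideal $I:=\mathbf{I}(\mathscr{V})\subseteq\C[z,w]$, and then read off a basis of that part from $G$ via leading monomials. I work in the extremal setting of this section, $r=\rk{M(k)}=v=\card{\mathcal V}<\infty$. Write $\C[z,w]_k$ for polynomials of degree $\le k$ and $N:=\dim\C[z,w]_k=\tfrac{(k+1)(k+2)}{2}$, so the nullity of $M(k)$ is $N-r$.

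\textbf{Step 1 (leading monomials of $p$ and $\overline{p}$).} Write $p=f(z)+\overline{g(z)}$. After a harmless normalization, such as a degree-one change of variables as in \cite{CY14}, I may assume $\deg f=k>\deg g$. Under the degree lexicographic order, $\varphi(p)$ then has leading monomial $z^k$ and $\overline{p}(z,w)$ has leading monomial $w^k$. Both polynomials vanish on $\mathscr V$, so $z^k,w^k\in\mathrm{LM}(I)$. By \cite[Theorem 5.3.6]{CLO15}, $\mathscr V$ is finite and $\dim\C[z,w]/I<\infty$. Since $I=\mathbf{I}(\mathscr V)$ is radical, \cite[Proposition 5.3.7]{CLO15} gives
\[
\#\{\text{standard monomials (monomials not in } \mathrm{LM}(I))\}=v .
\]

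\textbf{Step 2 (kernel equals $I_k:=I\cap\C[z,w]_k$).} The inclusion $\ker M(k)\subseteq I_k$ holds by the definition of $\mathcal V$. For the reverse inclusion I use the Stochel--Szafraniec theorem together with the extremal sufficiency result of \cite{CFM08}: when a representing measure $\mu$ exists, its support is contained in $\mathcal V$. Then any $q\in I_k$ satisfies $\Lambda(|q|^2)=\int|q|^2\,d\mu=0$, and positivity of $M(k)$ gives $M(k)\hat q=0$. Without invoking a measure, I would instead argue by dimension. The evaluation map $\C[z,w]_k\to\C^{\mathscr V}$ has kernel $I_k$, so $\dim I_k\ge N-v=N-r=\dim\ker M(k)$; combined with $\ker M(k)\subseteq I_k$, this forces equality.

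\textbf{Step 3 (counting via $G$).} Since $G$ is a Gröbner basis, every $q\in I_k$ has leading monomial divisible by $\mathrm{LM}(g)$ for some $g\in G$. The polynomials $m\,g$, where $g\in G$, $m$ is a monomial, $\deg(mg)\le k$, and we choose one representative for each monomial of $\mathrm{LM}(I)$ of degree $\le k$, have distinct leading monomials. They are therefore linearly independent, and the division algorithm shows they span $I_k$. Hence
\[
\dim\ker M(k)=\#\{\text{monomials of degree}\le k \text{ in } \mathrm{LM}(I)\},
\]
and every column relation of $M(k)$ is a combination of the relations $(mg)(Z,\overline Z)=0$ with $g\in G$. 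This gives the precise sense in which the elements of $G$ correspond to the column relations. To get the literal cardinality statement, I would then show that in the harmonic case every standard monomial has degree $\le k$. That makes the low-degree part of $\mathrm{LM}(I)$ exactly the set generated by $\mathrm{LM}(G)$, and I would check that the elements of $G$ and their admissible multiples are counted consistently, as in the $q_7,\overline{q_7},q_{LC}$ example.

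\textbf{Main obstacle.} The hard step is this last matching. In general the number of minimal generators of $\mathrm{LM}(I)$ is not the same as the number of monomials of degree $\le k$ in $\mathrm{LM}(I)$. So I must use the harmonic structure, specifically the elimination of $z^k$ and $w^k$ by $p$ and $\overline p$, to control which monomials are standard and to show that every element of $G$ has degree $\le k$. My first attempt would be to reduce an arbitrary element of $G$ modulo $p$ and $\overline p$, so that its terms involve only $z^aw^b$ with $a,b<k$. I would then use $v=r\le N-2$ to bound the degree of the staircase.
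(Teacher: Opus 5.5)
\textbf{Step 2 does not close.} The inclusion $\ker M(k)\subseteq I_k$ and the rank--nullity bound $\dim I_k\ge N-v=\dim\ker M(k)$ point in the same direction, so together they force nothing. Equality needs the reverse bound $\dim I_k\le N-v$, i.e.\ that evaluation $\C[z,w]_k\to\C^{\mathscr V}$ is onto. For a degree-compatible order, $\dim I_k$ is the number of monomials of degree $\le k$ in $\mathrm{LM}(I)$. So this is exactly the claim that all $v$ standard monomials have degree $\le k$, which is the fact you postpone to the end of Step 3. Steps 2 and 3 therefore rest on one unproved assertion. Your measure-based variant is circular, since Theorem \ref{thm31} is what feeds (2)$\Rightarrow$(1) of Theorem \ref{mainthm}. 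Consistency would give the equality at once ($q\in I_k$ forces $\Lambda(q\overline r)=0$ for every $r$, hence $M(k)\hat q=0$), but it is not a hypothesis. The paper's proof skips the same point: it simply asserts $q(Z,\overline Z)=0\iff q|_{\mathscr V}\equiv 0$, and then argues by extremality that any further column relation must itself be an element of $G$.

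\textbf{The final matching cannot be obtained from harmonicity.} Under the stated hypotheses the cardinality claim you flag as the main obstacle is false. Take $k=3$ and $\mu=\sum_{j=1}^{5}\delta_{z_j}$ with $z_1,\dots,z_5$ distinct points on the unit circle. On the circle $1,z,\overline z,z^2,\overline z^2$ become $z^{0},z^{\pm1},z^{\pm2}$, so their evaluation matrix at the $z_j$ is invertible. Hence there is a harmonic $p(z,\overline z)=z^3-c_2z^2-c_1z-c_0-c_{-1}\overline z-c_{-2}\overline z^{2}$ vanishing on $\operatorname{supp}\mu$. Then $M(3)\ge 0$ and $\mathcal V=\{z_1,\dots,z_5\}$, because on the circle $z^2p(z,1/z)$ is a monic quintic vanishing at the $z_j$. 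So $r=v=5$, and the nullity is $5$, with kernel spanned by $p,\overline p,zw-1,z(zw-1),w(zw-1)$. Yet $\mathrm{LM}(I)=\langle zw,z^3,w^3\rangle$, and the minimal Gr\"obner basis $\{zw-1,p,\overline p\}$ has only three elements.

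Your count $\dim\ker M(k)=\#\{m\in\mathrm{LM}(I):\deg m\le k\}$ (valid once Step 2 is repaired) records the leading monomials of all multiples $mg$ of degree $\le k$, not just those of the $g\in G$. To equate it with $\card{G}$ you would need, for instance, every minimal generator of $\mathrm{LM}(I)$ to have degree exactly $k$. That requires an extra hypothesis excluding column relations of degree $<k$ (Curto and Yoo assume $M(2)>0$), plus an argument that no minimal generator has degree $>k$. Neither harmonicity nor $v\le N-2$ supplies these.

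Minor point on Step 1: a degree-one change of variables does not alter $\deg f$ versus $\deg g$. The normalization you want is $p\mapsto\bar a\,p-c\,\overline p$, where $a,c$ are the coefficients of $z^k,\overline z^k$ in $p$, and it requires $|a|\ne|c|$.
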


\begin{proof}
Let $I = \mathbf{I}(\mathcal{V})$ be the ideal associated with $\mathcal{V}$. \ 
Since $\mathcal{V}$ is finite, we must have pure powers of $z$ and $w$ in $\mathrm{LM}(I)$, meaning that our Gr\"obner basis must include polynomials with pure powers of $z$ and $w$. \ 
Since we have $p(Z,\overline{Z}) \equiv 0$, we also have $\overline{p(Z,\overline{Z})} \equiv 0$.
Since $p$ and $\overline{p}$ both vanish on $\mathcal{V}$, they are elements of $I$. 

\par

Next, we claim that $p$ and $\overline{p}$ must have different leading monomials, one of which is a pure power of $z$ and the other a pure power of $w$. \ 
That they are pure powers comes from the fact that $p(z,\overline{z})$ is harmonic and thus of the form $p(z,\overline{z}) = f(z) + \overline{g(z)}$. \ 
Therefore, $p(z,w) = f(z) + g(w)$, so there are no monomials containing both $z$ and $w$. \ 

Now we prove that $\mathrm{LM}(p)$ and $\mathrm{LM}(\overline{p})$ are pure powers of different indeterminates. \  
Suppose otherwise. \ 
Then $p$ would contain the leading monomial $z^n$ as well as the monomial $w^n$. \ 
If this were the case, then $\overline{p}$ would also have $z^n$ as its leading monomial as well as $w^n$ as another monomial. \ 
However, we know that the number of roots of $p$ must be finite, which can only happen if $\mathcal{Z}(p,\overline{p})$ is finite, meaning that $p$ and $\overline{p}$ each have a different leading monomial, one a pure power of $z$ and the other a pure power of $w$, as shown in \cite[Theorem 5.3.6]{CLO15}.

\par

Thus, in our Gr\"obner basis we will have $p(z,w)$ and $\overline{p}(z,w)$, since these are both polynomials vanishing on the set and whose leading monomials are not divisible by the leading monomials of any other elements of $I$, which, without loss of generality, both have $1$ as their leading coefficient. \ 
We know that for a polynomial $q \in \C[z,w]_k$, $\hat{q} \in \ker{M(k)}$ if and only if $q(Z,\overline{Z}) = 0$ if and only if 
\begin{equation*}
         q\vert_{\mathscr{V}} \equiv 0.
\end{equation*}
Therefore, the remaining column relations are all elements of the Gr\"obner basis as well, since, without loss of generality, they will also have $1$ as their leading coefficient and their leading terms cannot be divisible by any other leading terms. \ 

Indeed, suppose we have $q \in \mathbb{C}[z,w]$, $\tilde{q} \in G$ such that $\mathrm{LM}{(q)}$ is divisible by $\mathrm{LM}{(\tilde{q})}$. 
Then $q$ would have to define a column relation for $M(k)$, which would contradict the assumption that our sequence was extremal. 

\end{proof}

\begin{thm} \label{thm32}
Suppose $M(k) \geq 0$ with column relation $p(Z,\overline{Z}) = 0$ given by $p \in \mathbb{C}[z,\overline{z}]_{k}$ harmonic. \ 
Let $\mathcal{V}$ be the variety associated with $M(k)$ and $G$ the Gr\"obner basis for the ideal associated with $\mathscr{V}$ (considered as a subset of $\mathbb{C}^2$). \ 
        Define the sets $\mathscr{P}_{2k}$ and $\mathscr{I}_{2k}$ as follows:
        \begin{equation}
                        \mathscr{P}_{2k} := \left\{ p \in \mathbb{C}[z,w]_{2k} \, : \, p_{\mathcal{V}} \equiv 0 \right\}
        \end{equation}
and
        \begin{equation}
                        \mathscr{I}_{2k} := \left\{ p \in \mathbb{C}[z,w]_{2k} \, : \, p = \sum_{j=1}^{v} a_{j} g_{j} , \, a_j \in \mathbb{C}[z,w]_{k} \right\}.
        \end{equation}
        Then $\mathscr{P}_{2k} = \mathscr{I}_{2k}$.
\end{thm}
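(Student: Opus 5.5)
The inclusion $\mathscr{I}_{2k}\subseteq\mathscr{P}_{2k}$ is immediate. Each $g_j\in G$ lies in $\mathbf{I}(\mathscr{V})$, so any combination $\sum_j a_j g_j$ vanishes on $\mathscr{V}$, and the degree bound is built into the definition of $\mathscr{I}_{2k}$. The substance is the reverse inclusion. The plan is to run the multivariate division algorithm with respect to the degree lexicographic order, dividing an arbitrary $q\in\mathscr{P}_{2k}$ by the Gr\"obner basis $G=\{g_1,\dots,g_v\}$ from Theorem \ref{thm31}.

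First, I would pin down the degrees of the $g_j$. By Theorem \ref{thm31} the elements of $G$ correspond to the column relations of $M(k)$, so each $g_j$ has degree at most $k$. Among them are $p$ and $\overline{p}$, whose leading monomials are pure powers $z^n$ and $w^n$ with $n\le k$.

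Next, take $q\in\mathscr{P}_{2k}$ and divide it by $G$ in degree lexicographic order, obtaining
\begin{equation*}
q=\sum_{j} a_j g_j + r ,
\end{equation*}
where no monomial of $r$ is divisible by any $\mathrm{LM}(g_j)$. Since $G$ is a Gr\"obner basis of $I=\mathbf{I}(\mathscr{V})$ and $q\in I$, the remainder satisfies $r=0$. The division algorithm under a degree-compatible order also guarantees $\deg(a_jg_j)\le \deg q\le 2k$.

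The expected obstacle is upgrading this to $\deg a_j\le k$, as the definition of $\mathscr{I}_{2k}$ demands. We only know $\deg a_j\le 2k-\deg g_j$, which is at most $k$ precisely when $\deg g_j\ge k$.

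To handle this, I would use extremality, $r=v$, which makes the standard monomials (those outside $\mathrm{LM}(I)$) exactly $v$ in number by the Cox--Little--O'Shea Proposition 5.3.7. I would argue that these all have degree $<k$, so that the column relations generating $\mathrm{LM}(I)$ sit in degree $k$. Where some $g_j$ has degree $d<k$, I would instead replace $a_jg_j$ by splitting $a_j$ into monomials $m$ of degree $>k$ and writing $m\,g_j=m'\,(m''g_j)$ with $\deg m'\le k$, where $m''g_j$ is a degree-$\le k$ polynomial vanishing on $\mathscr{V}$. Such a polynomial lies in the kernel of $M(k)$, hence is a combination of the $g_i$ by Theorem \ref{thm31} together with the flat/recursive structure of column relations. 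Iterating this regrouping gives coefficients of degree at most $k$. If this bookkeeping fails, the fallback is a dimension count: compare $\dim\mathscr{P}_{2k}$, which equals $\dim\mathbb{C}[z,w]_{2k}-v$ since $\mathscr{V}$ interpolates, with $\dim\mathscr{I}_{2k}$.
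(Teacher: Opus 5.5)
Your reduction is sound as far as it goes. The inclusion $\mathscr{I}_{2k}\subseteq\mathscr{P}_{2k}$ is trivial. Dividing $q\in\mathscr{P}_{2k}$ by $G$ in degree lexicographic order gives $q=\sum_j a_jg_j$ with zero remainder and $\deg a_j\le 2k-\deg g_j$. This is in substance all the paper's proof establishes: its appeal to extremality only re-derives that the remainder vanishes, and it never addresses $\deg a_j\le k$.

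The gap is exactly the step you flag, and none of your three repairs closes it.
\begin{itemize}
\item The claim that every standard monomial has degree $<k$ already fails in the motivating case $q_7$, $k=3$. There $v=7$ while only six monomials have degree $\le 2$. Explicitly, with $z>w$ one has $\mathrm{LM}(I)=(z^3,z^2w,w^3)$, and $zw^2$ is a standard monomial of degree $3$.
\item The regrouping is circular. In general $m''g_j$ is not a $\mathbb{C}$-linear combination of the $g_i$. For example, $wg_j$ is not one when $\deg g_j<k$, since its leading monomial is a proper multiple of $\mathrm{LM}(g_j)$ and $G$ is minimal. Any representation of $m''g_j$ with polynomial coefficients, once multiplied back by $m'$, again has coefficients of degree up to $2k-\deg g_i>k$, so nothing decreases.
\item The dimension count requires knowing $\dim\mathscr{I}_{2k}$, which is the whole question.
\end{itemize}

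More seriously, no bookkeeping can succeed without an extra hypothesis. The statement only requires $p\in\mathbb{C}[z,\overline{z}]_k$. So take $k=4$, $p=q_7$ (a cubic), and $\gamma$ the moments of a $7$-atomic measure on $\mathcal{Z}(q_7)$. Then $M(4)\ge 0$, $r=v=7$, and $G=\{q_7,\overline{q_7},q_{LC}\}$ consists of cubics. Hence every element of $\mathscr{I}_8$ has degree at most $7$, whereas $z^5q_7\in\mathscr{P}_8$ has degree $8$.

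Even with $\deg p=k$, the same issue arises. As soon as $v<\binom{k+1}{2}$ (e.g.\ $k=5$, $v=13$), some nonzero polynomial of degree $<k$ vanishes on $\mathscr{V}$, so $G$ contains elements of degree $<k$. For those $g_j$ the division algorithm gives nothing better than $\deg a_j\le 2k-\deg g_j>k$.

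What makes the argument work is the additional hypothesis that every $g_j$ has degree exactly $k$. This holds for $q_7,\overline{q_7},q_{LC}$ when $k=3$. Under that hypothesis your division step already finishes the proof, since $\deg a_j\le 2k-k=k$.
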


\begin{proof}
        Let $v = \card{\mathcal{V}}$. \ 
        Define $T : \left( \mathbb{C[}z,w]_{2k}\right)^{v} \to \C[z,w]_{2k}$ by
        \begin{equation*}
                T: (a_1,\ldots,a_v) \mapsto \sum_{j=1}^{v} a_j g_j, \quad a_j \in \C[z,w]_{k}
        \end{equation*}
        and $S: \C[z,w]_{2k} \to \C^v$ by
        \begin{equation*}
                S : p \mapsto (p(z_1,w_1), \ldots, p(z_v,w_v)), \quad (z_j,w_j) \in \mathcal{V} \subseteq \C^2.
        \end{equation*}
        It is clear that the image of $T$ is equal to $\mathscr{I}_{2k}$ and the kernel of $S$ is equal to $\mathscr{P}_{2k}$. \ 
        It is also clear that $\mathscr{I}_{2k} \subseteq \mathscr{P}_{2k}$. \ 
        We will prove that $\mathscr{P}_{2k} \subseteq \mathscr{I}_{2k}$. \ 
        Suppose that $p \in \ker{S}/\Ran{T}$, so $p = b + \Ran{T}$, with $b \in \ker{S}$. \ 
        Now let $I$ denote the ideal in $\C[z,w]$ generated by $G$ (with respect to the degree lexicographic order). \ 
        We know that the number of irreducible monomials in $\mathrm{LM}\,(I)$ is equal to the number of roots shared by the elements of $I$, which in our case is $v$. \ 
        Since we assume that the rank of $M(k)$ is equal to $v$, this is also the number of linearly independent basis elements in the column space of $M(k)$. \ 
        By construction, the irreducible monomials are the ones corresponding to the linearly independent columns. \ 
        This means that if $b$ is not reducible by $G$, its leading monomial is not divisible by any of the monomials in $\mathrm{LM}\,(I)$. \ 
        Since $b \in \ker{S} = \mathscr{P}_{2k}$, it vanishes on $\mathcal{V}$ and $\deg{b} \leq k$, $M(k) \cdot \hat{b} = 0$. \ 
        Therefore if $b$ were nonzero, it would correspond to another column relation, contradicting our assumption of extremality. \ 
        Thus, $b = 0$ and we have that $\mathscr{P}_{2k} \subseteq \mathscr{I}_{2k}$, proving the lemma. 
        
\end{proof}

The following result gives a concrete solution to the extremal moment problem for arbitrary $M(k)$ with a column relation $p(Z,\bar{Z})=0$, in terms of the variety associated to $p$ and $\bar{p}$; observe that no assumption on the harmonicity of $p$ is made.

\begin{thm} \label{mainthm}
        Let $k \ge 1$ and let $M(k) \geq 0$ be the moment matrix associated with a finite sequence of complex numbers $\gamma$. \ Suppose that  $p(Z,\overline{Z}) \equiv 0$ for some polynomial $p \in \mathbb{C}[z,\overline{z}]_{k}$. \ 
        Let $\mathcal{V}$ be the variety associated with $p$ and $\overline{p}$,and assume that $r:= \rk{M(k)}$ is equal to $v:= \card{\mathcal{V}}$. \ 
        Let $G = \{g_i\} \subseteq \mathbb{C}[z,w]$ be the Gr\"obner basis for the ideal associated with $\mathscr{V}$ (taken as a subset of $\mathbb{C}^2$). \ 
        Then the following statements are equivalent:
        \begin{enumerate}
                \item{there exists a representing measure for $\gamma$;}
                \item{$\Lambda(g_i(z,\overline{z}) = 0$ and $\Lambda(z \, g_i(z,\overline{z})) = 0$ for all $1 \leq i \leq \card{G}$;}
                \item{$g_i(Z,\overline{Z}) \equiv 0$ for all $1 \leq i \leq \card{G}$.}
        \end{enumerate}
\end{thm}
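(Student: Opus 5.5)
The plan is to run the cycle (1)$\Rightarrow$(3)$\Rightarrow$(2)$\Rightarrow$(1). The backbone is the Curto--Fialkow--M\"oller result \cite{CFM08}: in the extremal case $r=v$, positivity together with the consistency condition \eqref{consistency} is sufficient for a representing measure. Theorem \ref{thm32} will supply the algebraic input, namely that every polynomial of degree at most $2k$ vanishing on $\mathscr{V}$ lies in $\mathscr{I}_{2k}$.

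\emph{(1)$\Rightarrow$(3).} Let $\mu$ be a representing measure. By \cite{SS94}, $\operatorname{supp}\mu\subseteq\mathcal{V}$, and each $g_i$ vanishes on $\mathscr{V}=\psi(\mathcal{V})$. Hence $|g_i|^2$ vanishes on $\operatorname{supp}\mu$, so $\Lambda(|g_i|^2)=\int|g_i|^2\,d\mu=0$. For those $g_i$ of degree at most $k$, this quantity equals $\langle M(k)\hat g_i,\hat g_i\rangle$, and $M(k)\ge 0$ then forces $M(k)\hat g_i=0$, that is, $g_i(Z,\overline Z)=0$. Some elements of $G$ may have degree larger than $k$. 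For these, (3) has to be read in an extension $M(k+1)$, and I would use the fact that a measure for $\gamma^{(2k)}$ in the extremal case is $r$-atomic, so it produces a flat extension in which these relations do hold. This degree bookkeeping is one place where care is needed.

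\emph{(3)$\Rightarrow$(2).} A column relation $g_i(Z,\overline Z)=0$ means $\Lambda(\overline{z}^a z^b g_i)=0$ for every monomial of degree at most $k$, since these are exactly the entries of the column vector. Taking the monomials $1$ and $z$ gives $\Lambda(g_i)=0$ and $\Lambda(zg_i)=0$.

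\emph{(2)$\Rightarrow$(1).} This is the main step and the main obstacle. I would prove consistency and then apply \cite{CFM08}. Let $q\in\mathbb{C}[z,\overline z]_{2k}$ vanish on $\mathcal V$. By Theorem \ref{thm32}, $\varphi(q)=\sum_j a_j g_j$ with $\deg a_j\le k$, so consistency reduces to showing $\Lambda(a_jg_j)=0$ for all such $a_j$. Hypothesis (2) gives this only for $a_j\in\{1,z\}$. The task is to bootstrap from these two relations to all multipliers of degree at most $k$. My first attempt would follow \cite{CY14}:
\begin{enumerate}
\item Use $\Lambda(g_j)=0$ and $\Lambda(zg_j)=0$, and conjugation, which gives $\Lambda(\overline z\,\overline{g_j})=0$, to show that the column $g_j(Z,\overline Z)$ is orthogonal to the independent columns.
\item Conclude that this column vanishes, i.e.\ statement (3).
\item From the column relations in (3), obtain $\Lambda(\overline z^a z^b g_j)=0$ for all $a+b\le k$ (this is recursive generation in the flat/extremal setting).
\end{enumerate}
By linearity this yields $\Lambda(q)=0$, i.e.\ consistency, and extremality plus $M(k)\ge0$ then give a measure. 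The delicate point is justifying that the two scalar conditions $\Lambda(g_j)=\Lambda(zg_j)=0$ force the full column relation. For this I would exploit that, by Theorem \ref{thm31}, the remaining columns are already controlled by $p$ and $\overline p$, so only the components of $g_j(Z,\overline Z)$ along the basis columns $1$ and $Z$ (modulo these relations) remain unknown, and these components are exactly the quantities prescribed in (2).
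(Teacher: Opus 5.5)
The gap is the step you flag yourself: getting (3), or consistency, out of the two numbers $\Lambda(g_j)$ and $\Lambda(zg_j)$. Your justification for it does not hold. Your (1)$\Rightarrow$(3) (via $\Lambda(|g_i|^2)=0$ and $M(k)\ge 0$, for $\deg g_i\le k$) and (3)$\Rightarrow$(2) are fine, and finishing with Theorem \ref{thm32} and \cite{CFM08} is also how the paper ends.

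The relations $p(Z,\overline Z)=\overline p(Z,\overline Z)=0$ only let you eliminate columns indexed by multiples of $\mathrm{LM}(p)$ and $\mathrm{LM}(\overline p)$. For $M(k)\ge 0$, the column $g_j(Z,\overline Z)$ vanishes if and only if its entries in the $r=v$ rows indexing a basis of the column space vanish. That is $r$ conditions of the form $\Lambda(g_j\overline m)=0$, not two.

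Put $\mathscr{J}_d:=\mathbb{C}[z,w]_d\,p+\mathbb{C}[z,w]_d\,\overline p$. Linearly, $p$ and $\overline p$ only tell you that $\Lambda$ vanishes on $\mathscr{J}_k$. So consistency needs $\dim\mathscr{P}_{2k}-\dim\mathscr{J}_k$ further independent conditions, and nothing forces $\{g_j,zg_j\}$ to supply them.
\begin{itemize}
\item In \cite{CY14} they do, by a count: $\dim\mathscr{P}_6-\dim\mathscr{J}_3=21-19=2$, and $q_{LC}$, $zq_{LC}$ are independent modulo $\mathscr{J}_3$.
\item In the paper's quartic example ($k=4$, $v=10$) one needs $35-29=6$ conditions. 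But $g_j$, $zg_j$, and even $\overline z g_j$ all lie in $\mathscr{P}_5$ (degree $\le 5$, vanishing on $\mathscr{V}$). The image of $\mathscr{P}_5$ modulo $\mathscr{J}_4$ has dimension at most $\dim\mathscr{P}_5-\dim\mathscr{J}_1=11-6=5$.
\end{itemize}
These counts are valid whenever a measure can exist, since then $\rk{M(4)}=10$ forces the ten points to impose independent conditions on $\mathbb{C}[z,w]_4$.

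So (2), together with the relations from $p$ and $\overline p$, cannot yield (3) or consistency by linear algebra. A proof would have to use $M(k)\ge 0$ and $r=v$ in an essentially nonlinear way, or assume further moment conditions such as $\Lambda(mg_j)=0$ with $\deg m=2$. Your sketch contains nothing of that kind.

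For comparison, the paper does not bootstrap at all. In (2)$\Rightarrow$(1) it cites Theorem \ref{thm31} to assert that every $g_i$ is already a column relation of $M(k)$, then applies Theorem \ref{thm32}; hypothesis (2) is never actually used there. You were right not to take (3) for granted. In Theorem \ref{cy14}, $q_{LC}(Z,\overline Z)\equiv 0$ appears as one of the conditions equivalent to the existence of a measure, not as a consequence of $r=v$. But the mechanism you propose in its place does not close the step.
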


\begin{proof}
    We first prove the equivalence of (1) and (2). 
    
    (1) $\Rightarrow$ (2): This is rather straightforward. \ For, if $\mu$ is a representing measure for $\gamma$, we know that $3k-2 \leq r \leq \text{card}\,{\text{supp}\,{\mu}} \leq 3k-2 = v$,
    and therefore $\text{supp}\,{\mu} = \mathcal{V}$ and $r = v$.
    Thus,
    \begin{equation*}
        \varLambda(g_i) = \int_{\mathbb{C}} g_i \, d\mu = 0.
    \end{equation*}
    Since $\text{supp}\,{\mu} \subseteq \mathcal{V}$, we also have $\varLambda(z g_i) = 0$.

    \medskip

    (2) $\Rightarrow$ (1): From Theorem \ref{thm31}, we know that for every $g_i \in G$, $g_i(Z,\overline{Z}) \equiv 0$ is a column relation for $M(k)$. \ Since $g_i$ vanishes on $\mathcal{V}$, $mg_i$ vanishes on $\mathcal{V}$ for all monomials $m$ and, in particular, for all monomials of degree at most $k$. \ Therefore, since $mg_i \in \mathbb{C}_{2k}[z,w]$ vanishes on $\mathcal{V}$, Theorem \ref{thm32} tells us that every $mg_i \in \mathcal{P}_{2k} = \mathscr{I}$. \ 
    In other words, we have 
    \begin{equation*}
        mg_i = \sum_{j=0}^{\text{card}\,{G}} a_j g_j \quad \text{ for } \quad a_j \in \mathbb{C}_{k}[z,w].
    \end{equation*}
    Thus, $mg_i$ is annihilated by the Riesz functional for all monomials $m$ with $\deg{m} \leq k$. \ Therefore, consistency has been verified.

    \medskip

    Next, we prove the equivalence of (2) and (3). 
    
(2) $\Rightarrow$ (3): If $g_i(Z,\overline{Z}) \equiv 0$, then we know that $g_i \in \ker{\Lambda}$. \ This also means that $\Lambda(q(z,\overline{z}) \, g_i(z,\overline{z})) = 0$ for all $q \in \mathbb{C}[z,\overline{z}]_{k}$, so in particular $\Lambda{(z \, g_i(z,\overline{z}))} = 0$. 

(3) $\Rightarrow$ (2): Observe that $g_i \in \ker{\Lambda}$ if and only if $g_i(Z,\overline{Z}) \equiv 0$. 
\end{proof}

\subsection{Numerical Results}

Notably absent from Theorem \ref{mainthm} is the numerical condition that was present in Theorem \ref{thmcubic}, proved in \cite{CY14}. Such condition can be determined on a problem-by-problem basis once the Gr\"obner basis has been computed. If we suppose first that a representing measure does exist, then by evaluating the polynomial under the Riesz functional (condition 2 in Theorem \ref{mainthm}), we obtain equations that give the numerical conditions which are equivalent to the existence of a representing measure. 
If $g_i = \sum_{i,j} a_{ij} \overline{z}^i z^j$, then the numerical conditions are as follows:
\begin{equation}
        \sum_{i,j} a_{ij} \gamma_{ij} = 0
\end{equation}
and
\begin{equation}
        \sum_{i,j} a_{ij} \gamma_{i,j+1} = 0,
\end{equation}
using Theorem \ref{mainthm}(2).

\section{Examples}

\subsection{Reproducing the main result of Curto and Yoo}

In this section, we illustrate Theorem \ref{mainthm} using concrete examples.  
First, we are able to reproduce the main result in \cite{CY14}. 
For example, choosing $u = 5$ and $t=8$, we find that the Gr\"obner basis associated with the roots of $q_7(z,\overline{z}) = z^3 - i 8 z - 5 \overline{z}$ consists of $q_7$, $\overline{q_7}$ and $q_{LC}$.

Below are plots from the computation for $u=5, t=8$. Figure \ref{cyq7} shows the plot of $q_7$ and Figure \ref{cylc} shows the plot of the column relation found by computing the Gr\"obner basis.

\begin{figure}[H] 
    \centering
    \includegraphics[width=0.6\textwidth]{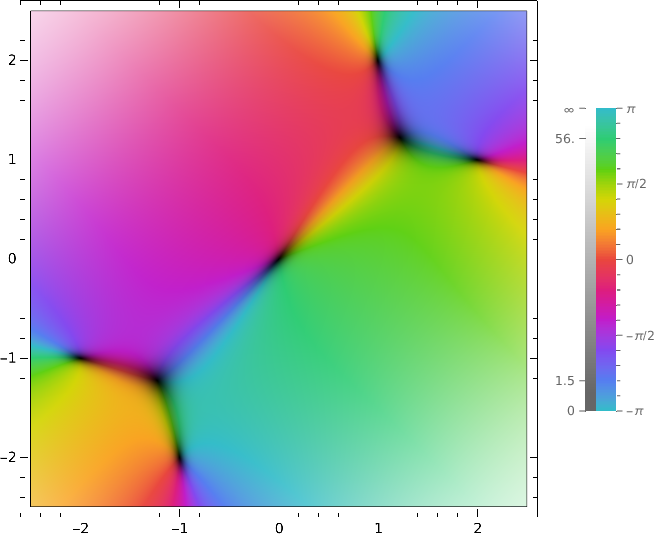}
    \caption{Complex plot of $q_7(z,\overline{z}) = z^3 - i 8 z + 5 \overline{z}$. \ We can clearly see the seven points of $\mathcal{V}$ in the plot of $q_7$.}
    \label{cyq7}
\end{figure}
        
\begin{figure}[H] 
        \begin{center}
        \includegraphics[width=0.6\textwidth]{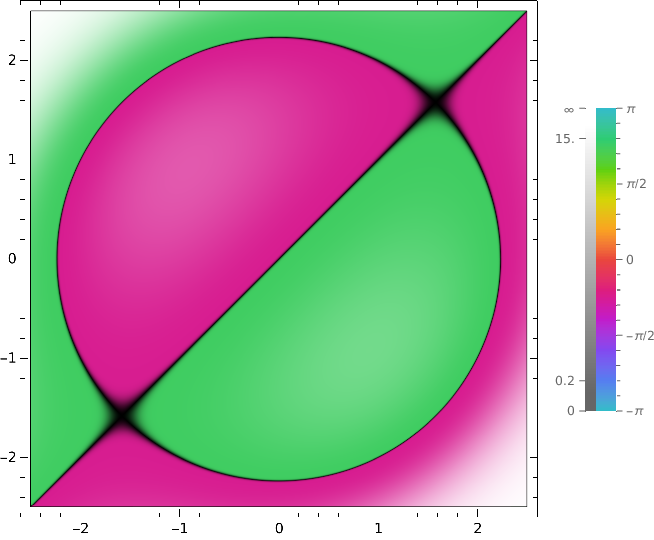}
                \caption{Complex contour plot of the polynomial found in the Gr\"obner basis, which is exactly $q_{LC}(z,\overline{z}) = i(z - i\overline{z})(\overline{z}z - 5)$.}
                \label{cylc}
        \end{center}
\end{figure}
We can use the numerical methods described above to find the same condition on the moments found in \cite{CY14}. 

\subsection{A different cubic result}

Next, we look at an example using the polynomial $\overline{z} + 3z^2 + 2z^3$, described in \cite{Wilmshurst98}.  
Suppose $M(3) \geq 0$ with $M(2) > 0$, and suppose that $M(3)$ has a column relation corresponding to the polynomial $p_{w3}(z,\overline{z}) := \overline{z} + 3z^2 + 2z^3$. 
We now consider the polynomial $p_{w3} \in \mathbb{C}[z,w]$ to be a polynomial of two independent variables, $(z,w) \in \mathbb{C}^2$. 
The Gr\"obner basis associated with $p_{w3}$ and $\overline{p_{w3}}$ constructed following the procedure in \cite{White25} contains the following elements: \vspace{-10pt}
        \begin{equation*}
                \begin{aligned}
                        g_1(z,w) &= w + 3z^2 + 2z^3 \\
                        g_2(z,w) &= -z + w + 2z^2 - 2w^2 + 4z^2 w - 4zw^2 \\
                        g_3(z,w) &= z + 3w^2 + 2w^3. \\
                \end{aligned}
        \end{equation*}
Thus, we know that the third column relation must be $g_2(Z,\overline{Z}) \equiv 0$. \ We again present plots of the polynomials corresponding to the column relations of $M(3)$ (cf. Figure \ref{fig4}). 
        \begin{figure}[H] 
                \begin{center}
                \includegraphics[width=0.49\textwidth]{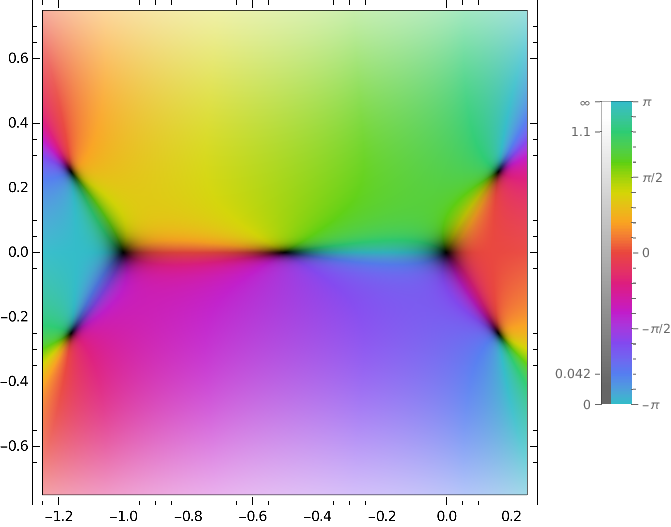}
                \includegraphics[width=0.49\textwidth]{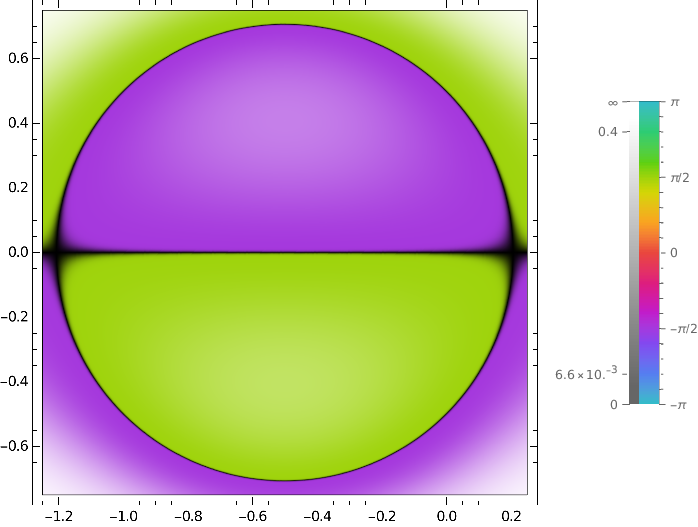}
                \caption{Complex contour plot of $g_1(z,\overline{z})$ (left) and of the polynomial found in the Gr\"obner basis, $g_2(z,\overline{z})$ (right)}
                \label{fig4}
                \end{center} 
        \end{figure}
We can then find the concrete condition to guarantee the existence of a representing measure for $M(3)$; that is, 
\begin{equation*}
        \begin{aligned}
                0 = \varLambda{(g_2(z,\overline{z}))} &= 2i \cdot  \Im{(\gamma_{10})} - 4i \cdot \Im{(\gamma_{20})} - 8i \cdot  \Im{(\gamma_{21})} ; \\
                0 = \varLambda{(z g_2(z,\overline{z}))} &= -\gamma_{02} + \gamma_{11} + 2 \gamma_{03} - 2 \gamma_{21} + 4 \gamma_{13} - 4 \gamma_{22}.
        \end{aligned}
\end{equation*}

\subsection{A quartic example}

Next, consider the degree-four polynomial also given in \cite{Wilmshurst98}:
\begin{equation*}
        p(z,\overline{z}) = z^4 + \frac{4}{3} z^3 + 2 z^2 + 3\overline{z} + 4z + \frac{11}{5}.
\end{equation*}
Observe that this polynomial has ten roots of the form $(z,\overline{z})$, so we expect to find a Gr\"obner basis with five polynomials all with degree at most four. \ 
Indeed, this is the case. \ 
Unfortunately, finding the roots of $p(z,w)$, and moreover finding the coefficients for the polynomials in our Gr\"obner basis, cannot be done nicely. 
By letting $z = x+iy$ and taking the resultant $\text{Res}(\Re{p(z,\overline{z})},\Im{p(z,\overline{z})},y)$, we find that in order to find the roots of $p(z,\overline{z})$, we need to find the roots of 
\begin{equation*}
    -303-2688x-6672x^2 + 1120 x^3 + 20160 x^4 + 17280 x^5 + 8640 x^6,
\end{equation*}
whose Galois group is the symmetric group on six elements. 
Nonetheless, Theorem \ref{mainthm} says that the Gr\"obner basis must have five elements, all corresponding to the column relations of $M(4)$. \ The plots of the relevant polynomials are shown below in Figure \ref{g1g2} and Figure \ref{g3}. 
\begin{figure}[H]
        \begin{center}
        \includegraphics[width=0.49\textwidth]{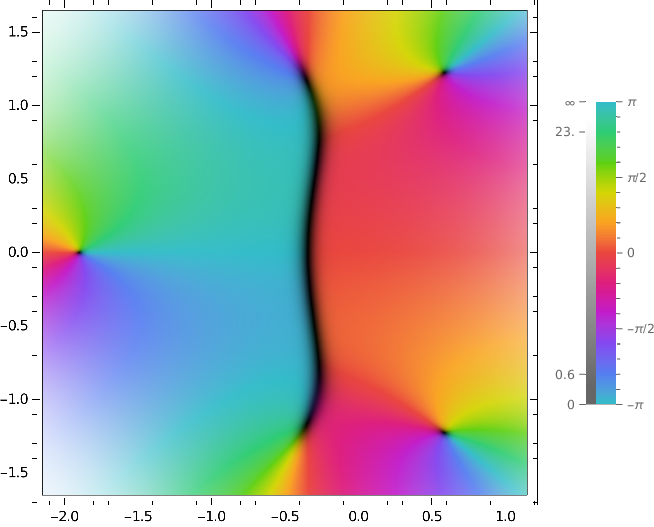}
        \includegraphics[width=0.49\textwidth]{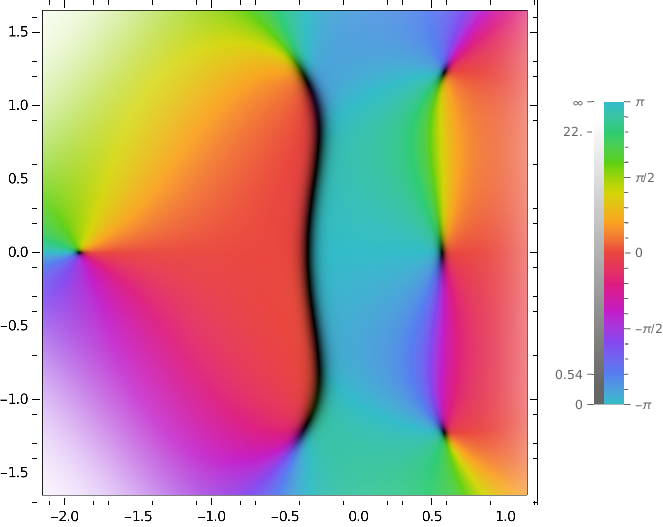}
        \caption{Complex contour plot of $g_1(z,\overline{z})$ (left) and $g_2(z,\overline{z})$ (right).}
        \label{g1g2}
        \end{center}
\end{figure}
\begin{figure}[H]
        \begin{center}
        \includegraphics[width=0.6\textwidth]{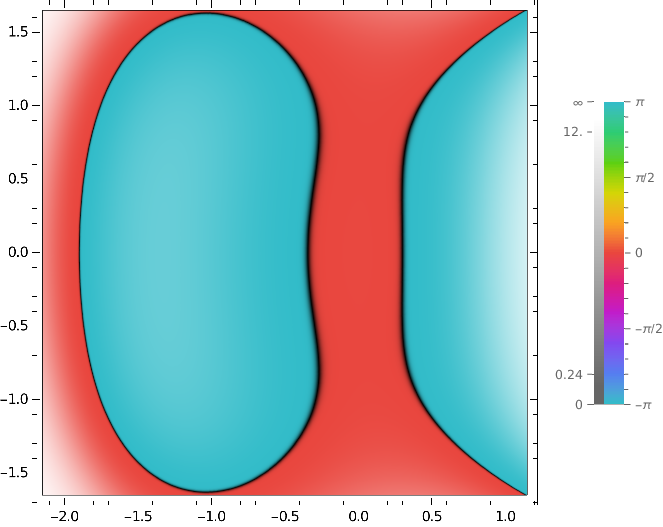}
                \caption{Complex contour plot of $g_3(z,\overline{z})$}
                \label{g3}
        \end{center}
\end{figure}

\section{Acknowledgments}
The first-named author was partially supported by U.S. NSF grant DMS-2247167. \ Several examples in this paper were obtained using calculations with the software tool \textit{Mathematica} \cite{Mathematica}.

\section{Declarations}

\subsection{Conflicts of interest/competing interests} \ None.

\subsection{Non-financial interests} \ The first-named author is on the Editorial Board of
{\it Journal of Mathematical Analysis and Applications}.

\subsection{Data availability} \ All data generated or analyzed during this study are included in this article.

\end{document}